\newcommand\wh[1]{\widehat{#1}}
\newcommand\bs[1]{\boldsymbol{#1}}
\newtheorem{theorem}{Theorem}
\newtheorem{lemma}{Lemma}[section]
\newtheorem{proposition}[lemma]{Proposition}
\newtheorem{definition}{Definition}[section]
\newtheorem{remark}{Remark}[section]
\newcommand{\R}{\mathbb{R}}
\renewcommand{\P}{\mathbb{P}}
\newcommand{\E}{\mathbb{E}}
\newcommand{\V}{\mathbb{V}}
\newcommand{\X}{\mathbf{X}}
\DeclareMathOperator{\1}{\mathbf{1}}
\newcommand{\g}{\mathbf{g}}
\newcommand{\lse}{\mathrm{lse}}
\DeclareMathOperator{\Var}{\mathrm{Var}}
\DeclareMathOperator{\Cov}{\mathrm{Cov}}
\DeclareMathOperator{\Corr}{\mathrm{Corr}}
\DeclareMathSymbol{\shortminus}{\mathbin}{AMSa}{"39}
\title{Local--global correlations of dynamics\\ on disordered energy landscapes}
\author{
Jacob Calvert\thanks{Institute for Data Engineering and Science, Georgia Institute of Technology, Atlanta, GA 30332.} \:\!\textsuperscript{,}\! \thanks{Santa Fe Institute, Santa Fe, NM 87501.}
\and 
Dana Randall\thanks{School of Computer Science, Georgia Institute of Technology, Atlanta, GA 30332.\\ 
\indent \indent \negmedspace \textit{Email addresses}: \texttt{calvert@gatech.edu}, \,  \texttt{randall@cc.gatech.edu}.}
}
\date{}
\begin{document}

\maketitle

\begin{abstract}
The stationary distribution of a continuous-time Markov chain generally arises from a complicated global balance of probability fluxes. Nevertheless, empirical evidence shows that the effective potential, defined as the negative logarithm of the stationary distribution, is often highly correlated with a simple local property of a state---the logarithm of its exit rate. To better understand why and how typically this correlation is high, we study reversible reaction kinetics on energy landscapes consisting of Gaussian wells and barriers, respectively associated with the vertices and edges of regular graphs. We find that for the correlation to be high it suffices for the heights of the barriers to vary significantly less than the depths of the wells, regardless of the degree of the underlying graph. As an application, we bound below the expected correlation exhibited by dynamics of the random energy model, known as the Bouchaud trap model. We anticipate that the proof, which combines a general lower bound of the expected correlation with the Gaussian concentration inequality, can be extended to several other classes of models.
\end{abstract}

\section{Introduction}\label{sec:intro}

Continuous-time Markov chains play a fundamental role in stochastic thermodynamics, which seeks to extend the ideas of classical thermodynamics to systems that are ``small'' or ``far'' from equilibrium \cite{VAnDEnBROECK20156}. In this context, Markov chains primarily serve to model the mesoscopic dynamics of physical system, but can also describe the effective macroscopic dynamics that emerges from it \cite{RevModPhys.97.015002}. The connection between Markov chains and thermodynamics begins with the idea that a stationary Markov chain satisfying detailed balance is analogous to the steady state of a physical system in thermal equilibrium, partly because both are statistically time-reversible \cite{Zia_2007}. The stationary distributions of nonreversible Markov chains, which are instead maintained by a global balance of probability fluxes along cycles of states, correspondingly model nonequilibrium steady states. The key observation is that important thermodynamic properties of a system, like entropy production, can be expressed in terms of these cycle fluxes \cite{RevModPhys.48.571}. For this reason, many results of stochastic thermodynamics---including fluctuation theorems \cite{crooks1998,lebowitz1999,andrieux2007}, thermodynamic uncertainty relations \cite{PhysRevLett.114.158101,PhysRevLett.116.120601}, bounds on nonequilibrium response \cite{Owen2020,PhysRevLett.132.037101}, and force-flow relations \cite{yang2025nonequilibriumforceflowrelationsnetworks}---concern the way and extent to which Markov chains violate detailed balance. 

\subsection{Arrhenius parameterization}

With the connection between Markov chains and thermodynamics in mind, we express the transition rates in the form of a generalized Arrhenius law \cite{doi:10.1073/pnas.1918386117,Owen2020,PhysRevLett.133.227101,PhysRevLett.132.037101,PhysRevLett.134.157101,floyd2025localimperfectfeedbackcontrol}. To be precise, we consider a continuous-time Markov chain that is irreducible on a set of $n \geq 2$ states, which we denote by $[n] = \{1,\dots,n\}$. For simplicity, we conflate the Markov chain with its transition rate matrix, which we denote by $\mathbf{Q} = (Q_{ij})_{i,j \in [n]}$. We view its states as the vertices of an underlying adjacency graph $G$, an edge $(i,j)$ of which is present if and only if the corresponding transition rate $Q_{ij}$ is positive. We express these transition rates as
\begin{equation}\label{eq:rates_w_b_f}
Q_{ij} = \exp \left( W_i - B_{ij} +  F_{ij}\right), \quad (i,j) \in E(G),
\end{equation}
in terms of vertex parameters $W_i$, symmetric edge parameters $B_{ij} = B_{ji}$, and anti-symmetric edge parameters $F_{ij} = - F_{ji}$, all real numbers.\footnote{Note that this parameterization is not unique.} Here and throughout, $G$ is a simple graph with undirected edges and vertex set $[n]$.

As \cite{Owen2020} explains, the transition rates when cast in this form are reminiscent of the Arrhenius expression for the reaction rates of a physical system evolving in a landscape of energy wells $W_i$ and energy barriers $B_{ij}$, driven by forces $F_{ij}$ (\cref{fig1}a). While the analogy is imperfect, it is accurate in the following important respect. In the absence of forcing (i.e., $F_{ij}=0$ for all $i$ and~$j$), the Markov chain satisfies detailed balance and its stationary distribution $\bs{\pi} = (\pi_i)_{i \in [n]}$ has the Boltzmann form:
\begin{equation}\label{eq:boltzmann}
    \pi_i \propto \exp (-W_i).
\end{equation}
In reference to \cref{eq:boltzmann}, even when a Markov chain violates detailed balance, the quantity $-\log \pi_i$ is referred to as the effective potential of state $i$ \cite{strang_applications_2020}.

\begin{figure}
    \centering
    \includegraphics[width=\linewidth]{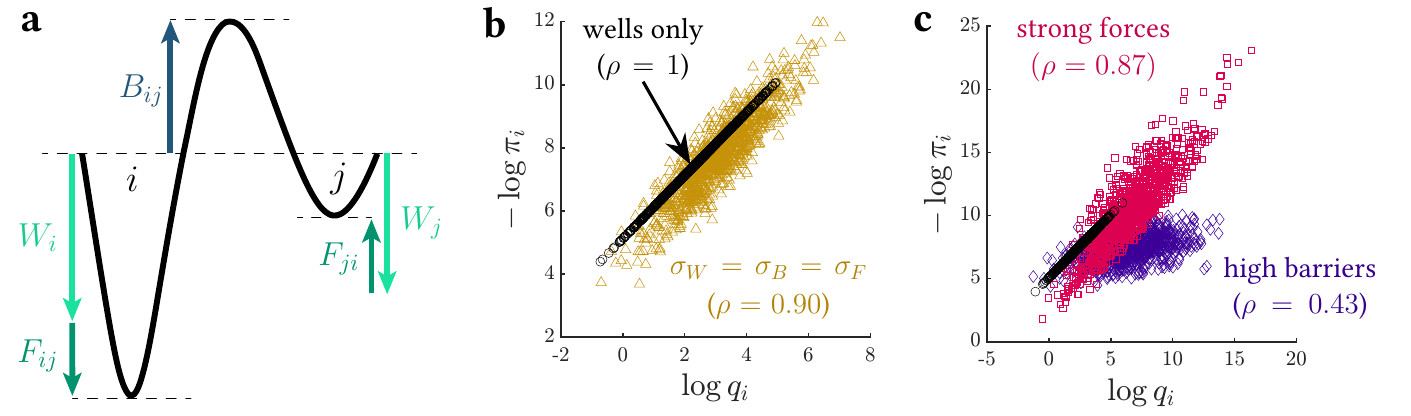}
    \caption{Local--global correlations in Arrhenius-like dynamics. (\textbf{a}) The transition rates in \cref{eq:rates_w_b_f} evoke the Arrhenius expression for the dynamics of a physical system in an energy landscape of wells $W_i$, separated by symmetric barriers $B_{ij} = B_{ji}$, and driven by anti-symmetric forces $F_{ij} = - F_{ij}$. (\textbf{b}) When these components are independent, centered normal random variables with standard deviations $\sigma_W = \sigma_B = \sigma_F = 1$, the effective potential $-\log \pi_i$ of the resulting Markov chain is highly collinear with $\log q_i$ (yellow triangles represent individual states). For comparison, in the absence of barriers and forces (i.e., $\sigma_B = \sigma_F = 0$), the effective potential $-\log \pi_i$ and the log exit rates $\log q_i$ are perfectly collinear because $\pi_i \propto 1/q_i$ (black circles). (\textbf{c}) Collinearity is greater in the presence of strong forces, which violate detailed balance ($\sigma_F = 4\sigma_W$, $\sigma_B = 0$; red squares), than in the presence of barrier heights of equal magnitude, which preserve it ($\sigma_B = 4 \sigma_W$, $\sigma_F = 0$; blue diamonds). In all cases, $\sigma_W = 1$, $n = 2^{10}$, the adjacency graph $G$ is the $10$-dimensional hypercube, and $\rho$ denotes the correlation coefficient.
    }
    \label{fig1}
\end{figure}

\subsection{Local--global correlations in Markov chains}\label{subsec: local global}

While the results of stochastic thermodynamics generally concern the way and extent to which detailed balance fails, an emerging perspective instead emphasizes the relationship  between local dynamical information and the global steady state distribution \cite{Chvykov2021,Calvert2024,calvert2025noteasymptoticuniformitymarkov}. In Markov chain terms, the key observation is that, in many cases, the effective potential of a state is highly collinear with the logarithm of its exit rate
\begin{equation*}
    q_i = \sum_{j:\, j \sim i} Q_{ij},
\end{equation*}
where $j \sim i$ indicates that $(i,j)$ is an edge of $G$ (\cref{fig1}b). We measure the collinearity of these quantities using their correlation for a uniformly random state $I \in [n]$:
\begin{equation}\label{eq:def_rho}
    \rho = \Corr (-\log \pi_I, \log q_I) = -\frac{\Cov (\log \pi_I, \log q_I)}{\sqrt{\Var (\log \pi_I) \Var (\log q_I)}}.
\end{equation}
Note that $\rho$ is well defined whenever $\pi_I$ and $q_I$ have positive, finite variances.

The effective potential $-\log\pi_i$ is a global property of a Markov chain, in the sense that estimating it from a trajectory generally requires the observation of multiple returns to state $i$, which can entail visits to faraway states (see \cite{nIPS2013_99bcfcd7}, for example). In contrast, to estimate $q_i$, it suffices to repeatedly initialize the Markov chain in state $i$ and average the times it takes to leave, because the exit rate $q_i$ is equal to the reciprocal of the expected duration of a visit to state $i$. In this sense, $\log q_i$ is a local property of a state, and $\rho$ measures a local--global correlation.

An intriguing aspect of $\rho$, especially in the context of stochastic thermodynamics, is that the extent of correlation is generally unrelated to whether a Markov chain satisfies detailed balance (\cref{fig1}c). It is therefore especially important to understand why and how typically systems exhibit high correlation. In \cite{Calvert2024}, we derived a formula for~$\rho$ in terms of two further Markov chain parameters, and empirically studied $\rho$ in Markov chains that arise in various scientific domains. However, no previous work has rigorously estimated~$\rho$ in a broad class of Markov chains.

\section{Main results}\label{sec:main result}

To better understand why high correlation arises, and how typically it does so, we fix a graph $G$ and analyze the correlation exhibited by Arrhenius-like dynamics on a disordered energy landscape consisting of wells and barriers alone (\cref{fig2}). Given well depths $\mathbf{W} = (W_i)_{i \in V(G)}$ and barrier heights $\mathbf{B} = (B_{ij})_{(i,j) \in E(G)}$ on $G$, we define $\mathbf{Q} = \mathrm{Arr}_G (\mathbf{W},\mathbf{B})$ to be the transition rate matrix with entries
\begin{equation}\label{eq:rates_w_b}
Q_{ij} = \exp(W_i - B_{ij}), \quad (i,j) \in E(G).
\end{equation}

\begin{definition}\label{def:landscape}
Let $\sigma_W$ and $\sigma_B$ be positive real numbers and let $G$ be a graph. We define a $(\sigma_W,\sigma_B)$-disordered energy landscape on $G$ to be a pair $(\mathbf{W},\mathbf{B})$ of i.i.d.\ $\mathcal{N}(0,\sigma_W^2)$ well depths $\mathbf{W} = (W_i)_{i \in V(G)}$, and barrier heights $\mathbf{B} = (B_{ij})_{(i,j) \in E(G)}$ that satisfy $B_{ij} = B_{ji} \sim \mathcal{N}(0,\sigma_B^2)$ for every $(i,j) \in E(G)$.
\end{definition}

While the barrier heights in \cref{fig2} are i.i.d.\ (aside from the symmetry condition $B_{ij} = B_{ji}$), our main results concern a broader class of disorder. Specifically, we allow the barrier heights adjacent to a state $i$ to depend on one another through the corresponding well depth. This flexibility is important for an application that we shortly present.

\begin{definition}\label{def:separable}
    We say that a disordered energy landscape $(\mathbf{W},\mathbf{B})$ on $G$ is separable if there exist $f: \R \to \R$ and $\sigma > 0$ such that, for every $i \in V(G)$, $(B_{ij} - f(W_i))_{j: \, j \sim i}$ is a vector of i.i.d.\ $\mathcal{N}(0,\sigma^2)$ random variables, which are also independent of $W_i$.
\end{definition}

The first of our main results explains that, if the typical barrier height in a separable energy landscape is small relative to the typical well depth, then the expected correlation $\rho$ exhibited by Arrhenius-like dynamics is close to~$1$. To ensure that $\rho$ is well defined, we stipulate that the exit rates $\mathbf{q}$ are a.s.\ non-identical, meaning that there are states $i$ and $j$ such that $q_i \neq q_j$.

\begin{figure}
    \centering
    \includegraphics[width=\linewidth]{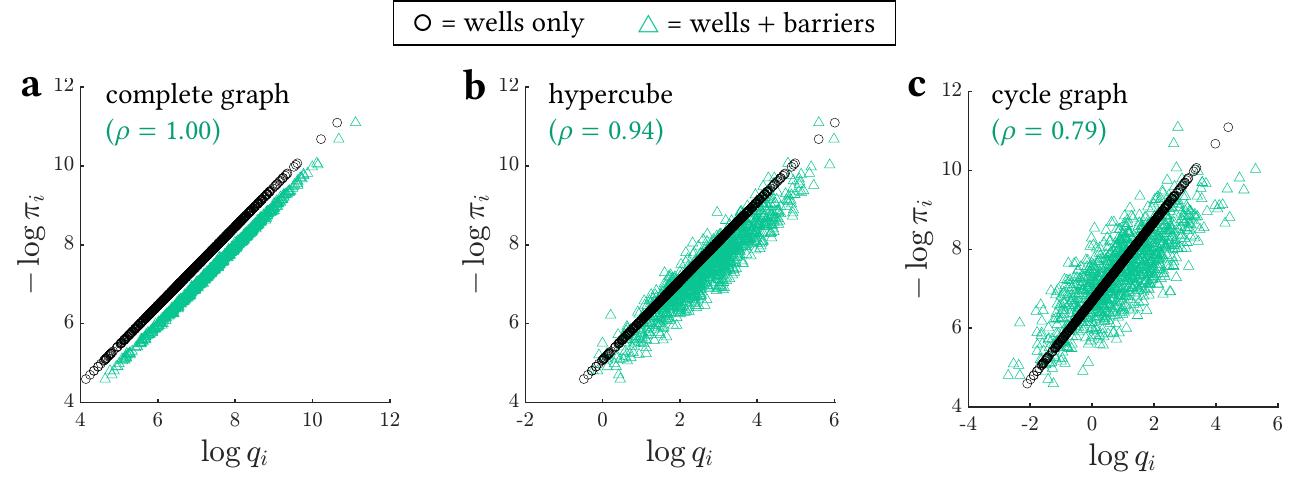}
    \caption{Local--global correlations in reversible Arrhenius-like dynamics on regular graphs. In the absence of barriers and forces ($\sigma_W = 1$ and $\sigma_B = \sigma_F = 0$, black circles), the effective potential ($-\log \pi_i$) and log exit rates ($\log q_i$) are perfectly collinear and $\rho = 1$. Collinearity approximately persists when the typical barrier height is equal to the typical well depth ($\sigma_W=\sigma_B=1$ and $\sigma_F = 0$, green triangles), when the adjacency graph $G$ is (\textbf{a}) the complete graph, (\textbf{b}) the hypercube, and (\textbf{c}) the cycle graph. In all cases, $n = 2^{10}$.
    }
    \label{fig2}
\end{figure}

\begin{theorem}\label{thm:rho}
Let $G$ be a connected, regular graph with $n \geq 6$ vertices, and let $\sigma_W$ and $\sigma_B$ be positive real numbers. If $(\mathbf{W},\mathbf{B})$ is a separable, $(\sigma_W,\sigma_B)$-disordered energy landscape on $G$ for which $\mathbf{Q} = \mathrm{Arr}_G (\mathbf{W},\mathbf{B})$ has a.s.\ non-identical exit rates, then the correlation $\rho = \rho (\mathbf{Q})$ satisfies
\begin{equation}\label{eq:main result rho}
\E (\rho) \geq 1 - 8\left(1 + O(n^{-1/2}) \right) \left( \frac{\sigma_B}{\sigma_W} \right)^2.
\end{equation}
\end{theorem}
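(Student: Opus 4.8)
The plan is to lower-bound $\E(\rho)$ by controlling the correlation state-by-state. Recall $-\log\pi_i = \log q_i + \log(\sum_j \pi_j q_j^{-1}\cdots)$ type identities are messy, so instead I would start from the reversible structure. Since the landscape is separable with $F\equiv 0$, detailed balance holds and $\pi_i \propto e^{-W_i}$, so $-\log\pi_i = W_i + C$ for a common constant $C$. Thus $\Var(-\log\pi_I) = \Var(W_I) = \sigma_W^2$ exactly, and the numerator of $\rho$ becomes $\Cov(W_I,\log q_I)$. The whole problem therefore reduces to understanding $\log q_i = \log\sum_{j\sim i}\exp(W_i - B_{ij}) = W_i + \log\sum_{j\sim i} e^{-B_{ij}}$. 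Writing $S_i = \log\sum_{j\sim i} e^{-B_{ij}}$, we have $\log q_I = W_I + S_I$, and the key point is that $S_i$ is a ``small'' fluctuation when $\sigma_B \ll \sigma_W$.

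\medskip

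Concretely, I would write $\rho = \Corr(W_I, W_I + S_I) = (\sigma_W^2 + \Cov(W_I,S_I))/(\sigma_W\sqrt{\Var(W_I+S_I)})$ for each realization of the disorder, treating $I$ as uniform over $[n]$ and the disorder as fixed inside the conditional moments. The strategy is to show $\Var(S_I)$ and $|\Cov(W_I,S_I)|$ are both $O(\sigma_B^2)$ while $\Var(W_I) \approx \sigma_W^2$, and then expand the ratio. The separability hypothesis is what makes $S_i$ tractable: after subtracting $f(W_i)$, the variables $B_{ij}-f(W_i)$ are i.i.d.\ $\mathcal N(0,\sigma^2)$ and independent of $W_i$, so $S_i = -f(W_i) + \log\sum_{j\sim i} e^{-(B_{ij}-f(W_i))}$, and the fluctuating part has the same law across all vertices because $G$ is regular (every vertex has the same degree $d$). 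I would use the elementary bound that $\log(\frac1d\sum_{k=1}^d e^{X_k})$, for i.i.d.\ centered Gaussians $X_k$ of variance $\sigma^2$, is a bounded-variance, log-sum-exp functional whose variance is $O(\sigma^2)$ by Gaussian concentration (the log-sum-exp is $1$-Lipschitz in each coordinate after scaling), and $\sigma^2 \le \sigma_B^2$ since $B_{ij}$ has variance $\sigma_B^2$ and subtracting a constant only reduces spread.

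\medskip

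The $O(n^{-1/2})$ term in \cref{eq:main result rho} signals that the bound is not purely pointwise: the empirical variances $\Var(W_I)$, $\Var(S_I)$, $\Cov(W_I,S_I)$ computed over the random state $I$ are themselves random (they depend on the realized disorder), and they concentrate around their expectations $\sigma_W^2$, $\Theta(\sigma^2)$, etc., only up to fluctuations of relative order $n^{-1/2}$. So the real work is a two-layer argument: first fix the disorder and express $\rho$ as a ratio of empirical second moments; then show these empirical moments are close to their population values with high probability, using concentration for sums of weakly dependent terms (the $S_i$ across vertices are independent up to shared edge variables, and on a regular graph each $B_{ij}$ is shared by only two vertices, giving a martingale-difference or Efron--Stein bound of order $n^{-1/2}$). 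I anticipate the main obstacle is precisely this concentration step for the empirical covariance and variances, because $\rho$ is a nonlinear function of them and one must control the denominator away from zero --- this is where the a.s.\ non-identical exit rate hypothesis and the lower bound $n\ge 6$ enter, ensuring $\Var(\log q_I)>0$ and that the ratio is well defined and stable. Combining a general deterministic lower bound $\rho \ge 1 - c\,\Var(S_I)/\Var(W_I)$ with Gaussian concentration of the empirical moments should then yield \cref{eq:main result rho} after expanding $(1+O(n^{-1/2}))$ and collecting the constant $8$.
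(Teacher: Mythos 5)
Your overall architecture matches the paper's: reversibility gives $-\log\pi_i = W_i + \text{const}$ and $\log q_i = W_i + A_i$ with $A_i = \log\sum_{j\sim i}e^{-B_{ij}}$, a worst-case-over-covariance bound reduces everything to the ratio $r^2 = \Var A_I/\Var W_I$, and the Lipschitz property of log-sum-exp plus Gaussian concentration controls $\Var A_I$ by $O(\sigma_B^2)$ independently of the degree. These are exactly \cref{prop: first part} and \cref{lem: final varai bds}. (Two small corrections: the empirical variance $\Var W_I$ is not ``exactly $\sigma_W^2$'' --- it is the sample variance of the realized wells, a random quantity --- and your intermediate goal $|\Cov(W_I,S_I)| = O(\sigma_B^2)$ is not attainable pointwise; Cauchy--Schwarz only gives $O(\sigma_W\sigma_B)$, which is why the paper passes to the worst case $\widehat\rho = -r$ and retains only $r^2$.)

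The genuine gap is in your final step, the treatment of the random denominator $\Var W_I$. You propose to show that the empirical moments concentrate within $O(n^{-1/2})$ of their population values with high probability and then expand the ratio. On the complementary bad event you can only use $\rho \geq -1$, so this route yields a bound of the form $1 - C(\sigma_B/\sigma_W)^2 - C'\,\P(\text{bad})$, and $\P(\text{bad})$ does not scale with $\sigma_B$. When $\sigma_B/\sigma_W$ is much smaller than $\sqrt{\P(\text{bad})}$ the additive error dominates, so you cannot reach the purely multiplicative form $1 - 8(1+O(n^{-1/2}))(\sigma_B/\sigma_W)^2$ of \cref{eq:main result rho}. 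The paper avoids any event splitting: it writes $\E(\Var A_I/\Var W_I) = \E(\Var A_I)\,\E\big((\Var W_I)^{-1}\big) + \E(\Var A_I \cdot D_I)$ using an independent copy $\mathbf W'$ of the wells, computes $\E\big((\Var W_I)^{-1}\big)$ and its variance exactly from the inverse-gamma law of $(\Var W_I)^{-1}$ (\cref{lem:invvar} --- this, not positivity of $\Var \log q_I$, is where $n\geq 6$ enters, to make that variance finite), and controls the error term by Cauchy--Schwarz against $\E\big((\Var A_I)^2\big) = O(\sigma_B^4)$, so that every term carries a factor of $\sigma_B^2$. Your proposed martingale/Efron--Stein concentration for the $A_i$ across vertices is then unnecessary, since the moment bounds of \cref{lem:replace_var} suffice.
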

\noindent The condition that $n \geq 6$ arises naturally in the proof, but it is not significant otherwise. We have not tried to optimize the coefficient $8(1+O(n^{-1/2}))$, but we expect that it can be improved, e.g., by incorporating information about the degree of $G$ (see \cref{rem:degree}). The condition that the exit rates are a.s.\ non-identical ensures that the correlation $\rho$ is well defined and amounts to a mild restriction on the dependence between the well depths and barrier heights.

\begin{figure}
    \centering
    \includegraphics[width=\linewidth]{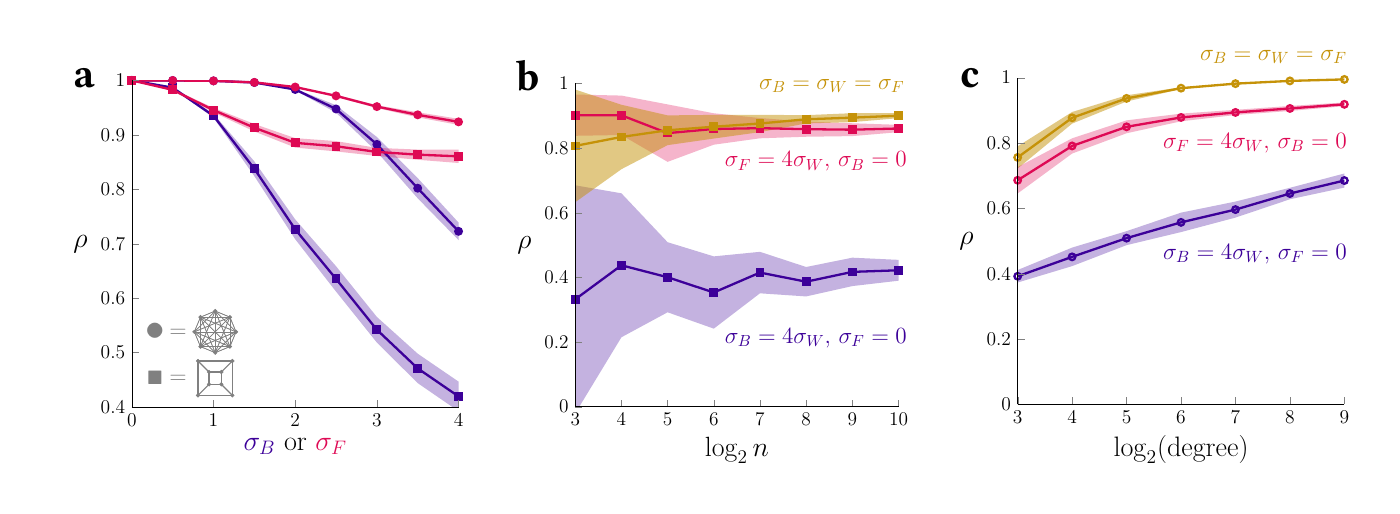}
    \caption{Trends in correlation. (\textbf{a}) Correlation decreases as $\sigma_B$ increases relative to $\sigma_W = 1$ with $\sigma_F = 0$ (blue curves), on both the complete graph (circles) and hypercube (squares) with $n = 2^{10}$ states. The same is true when the roles of $\sigma_B$ and $\sigma_F$ are reversed (red curves), although the correlation decreases more gradually in this case. (\textbf{b}) Regardless of whether the magnitudes of wells, barriers, and forces are balanced (yellow), forces dominate (red), or barriers dominate (blue), the correlation is relatively constant as the number $n$ of states increases. (\textbf{c}) However, increasing degree can favor higher correlation. In (\textbf{c}), random regular graphs with $2^{10}$ vertices were generated from a circulant graph of appropriate degree by swapping $10 \times \textrm{degree}$ pairs of edges uniformly at random, while keeping the graph simply connected. In all plots, $\sigma_W = 1$ and marks indicate the mean of $\rho$ over $25$ identical trials, with shaded regions corresponding to $\pm 1$ standard deviation.}
    \label{fig3}
\end{figure}

We expect that the proof of \cref{thm:rho} can be extended in several directions, including to non-regular and random adjacency graphs, non-Gaussian distributions on well depths and barrier heights, and energy landscapes with more general dependence among wells and barriers. A lower bound on the expected correlation exhibited by non-reversible Arrhenius-like dynamics, i.e., with nonzero forces in \cref{eq:rates_w_b_f}, would be especially valuable. The primary barrier to proving such a bound is the lack of a useful expression for the stationary distribution $\bs{\pi}$. We could extend \cref{thm:rho} to the case of weak forces using results from the literature on Markov chain perturbation (e.g., \cite{Mitrophanov2003}), but \cref{fig1,fig3} show that the correlation can also be high when forces are strong.

\subsection{Application to spin-glass dynamics}\label{subsec:application}

Our second main result is an application of \cref{thm:rho} to a family of dynamics associated with the random energy model (REM). Unlike the examples considered in \cref{fig1,fig2,fig3}, these dynamics entail barrier heights that depend on the depths of adjacent wells.

The REM is a mean-field model of a spin-glass consisting of $N$ spins, the configurations of which have random energies \cite{derrida_random-energy_1980,derrida_random-energy_1981}. More precisely, it is a random Boltzmann distribution on the hypercube $\{-1,1\}^N$ that arises from setting the well depths $W_i$ in \cref{eq:boltzmann} to be i.i.d.\ $\mathcal{N}(0,\sigma_W^2)$ random variables, where $\sigma_W^2 = \beta^2 N$ and $\beta > 0$ denotes inverse temperature. We consider dynamics of the REM that Bouchaud introduced to study the phenomenon of aging \cite{refId0}, a generalization of which is known as the Bouchaud trap model \cite{MR2581889}.

As before, we let $G$ be a connected, regular graph with $n$ vertices. For a parameter $\lambda \in [0,1]$ and well depths $\mathbf{W} = (W_i)_{i \in [n]}$, we denote by $\mathrm{REM}_G(\lambda, \mathbf{W})$ the transition rate matrix $\mathbf{R}$ with entries 
\begin{equation}\label{eq:rem_dynamics}
R_{ij} = \exp\left( \lambda W_i - (1-\lambda) W_j \right), \quad (i,j) \in E(G).
\end{equation}
Note that $\mathbf{R}$ satisfies detailed balance with respect to the Boltzmann stationary distribution \eqref{eq:boltzmann}, for every $\lambda \in [0,1]$.

By varying $\lambda$ from $0$ to $1$, the transition rates interpolate between two extremes. The first entails transitions from $i$ to $j \sim i$ that occur at a rate of $\exp(-W_j)$. In other words, transitions occur at a rate that depends on the energy of the destination. The second extreme entails rates of $\exp(W_i)$ that instead depend on the energy of the originating state, again for $j \sim i$. For this reason, $\lambda$ is said to measure the ``locality'' of the dynamics \cite{MR2581889}.

With this interpretation of $\lambda$, \cref{thm:rho} states the intuitive fact that, for the REM dynamics to exhibit high correlation between the effective potential and its local part, it suffices for the dynamics to be highly local. It further shows that it is enough for $\lambda$ to be close to $1$ in a way that does not depend on the underlying graph $G$ or any other model parameters. In the following statement, we use $\mathbf{I}_n$ to denote the $n \times n$ identity matrix.

\begin{theorem}\label{thm:rem}
    Let $G$ be a connected, regular graph with $n \geq 6$ vertices, let $\sigma_W > 0$, and let $\lambda \in [0,1]$. If $\mathbf{W} \sim \mathcal{N}(\mathbf{0},\sigma_W^2 \mathbf{I}_n)$, then the correlation $\rho = \rho (\mathbf{R})$ exhibited by the REM dynamics $\mathbf{R} = \mathrm{REM}_G (\lambda,\mathbf{W})$ satisfies
    \[
    \E (\rho) \geq 1 - 16 \left(1+O(n^{-1/2})\right) (1-\lambda)^2.
    \]
\end{theorem}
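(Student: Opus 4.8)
\noindent
\textbf{Proof proposal for \cref{thm:rem}.}
The plan is to recognize the REM dynamics as Arrhenius-like dynamics on a separable disordered energy landscape and then invoke \cref{thm:rho}; the crux is a reparameterization. Comparing $R_{ij} = \exp(\lambda W_i - (1-\lambda)W_j)$ with the Arrhenius form \eqref{eq:rates_w_b}, I would set the effective well depths and barrier heights to be
\[
W_i' = W_i, \qquad B_{ij}' = (1-\lambda)(W_i + W_j), \quad (i,j) \in E(G).
\]
Then $W_i' - B_{ij}' = W_i - (1-\lambda)(W_i + W_j) = \lambda W_i - (1-\lambda)W_j$ and $B_{ij}' = B_{ji}'$, so $\mathbf{R} = \mathrm{Arr}_G(\mathbf{W}',\mathbf{B}')$ exactly, with symmetric effective barriers. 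Note that the barriers adjacent to a vertex $i$ are dependent through the shared term $(1-\lambda)W_i$, which is precisely the dependence the separable framework is built to accommodate.

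Next I would verify the hypotheses of \cref{thm:rho}. For \cref{def:landscape}: the effective well depths $W_i' = W_i$ are i.i.d.\ $\mathcal{N}(0,\sigma_W^2)$ by assumption, and each $B_{ij}'$ is marginally centered Gaussian with $\sigma_B^2 = (1-\lambda)^2 \, \Var(W_i + W_j) = 2(1-\lambda)^2 \sigma_W^2$, so that $(\sigma_B/\sigma_W)^2 = 2(1-\lambda)^2$. For \cref{def:separable}: taking $f(w) = (1-\lambda)w$ and $\sigma = (1-\lambda)\sigma_W$, I would observe that $B_{ij}' - f(W_i') = (1-\lambda)W_j$, and since $G$ is simple every neighbor $j \sim i$ satisfies $j \neq i$; hence $((1-\lambda)W_j)_{j:\,j\sim i}$ is a vector of i.i.d.\ $\mathcal{N}(0,\sigma^2)$ variables independent of $W_i$. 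As long as $\lambda < 1$ we have $\sigma_W,\sigma_B,\sigma > 0$, so the landscape hypotheses hold.

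It then remains to check the a.s.\ non-identical exit rates condition and to substitute the variance ratio. For the former, I would note that $q_i = \exp(\lambda W_i)\sum_{j:\,j\sim i}\exp(-(1-\lambda)W_j)$ is real-analytic in $\mathbf{W}$, so the event $\{q_1 = \cdots = q_n\}$ lies in the zero set of a real-analytic function that is not identically zero (exhibit one configuration with unequal exit rates), which therefore has Lebesgue, and hence Gaussian, measure zero. Feeding $(\sigma_B/\sigma_W)^2 = 2(1-\lambda)^2$ into \cref{thm:rho} gives
\[
\E(\rho) \geq 1 - 8\left(1 + O(n^{-1/2})\right)\cdot 2(1-\lambda)^2 = 1 - 16\left(1 + O(n^{-1/2})\right)(1-\lambda)^2,
\]
as claimed. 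The boundary case $\lambda = 1$ must be treated separately, because there $\sigma_B = 0$ and \cref{thm:rho} does not apply: then $R_{ij} = \exp(W_i)$, so on the $d$-regular graph $G$ one has $q_i = d\exp(W_i)$ and $\pi_i \propto \exp(-W_i)$, whence $\log q_i$ and $-\log\pi_i$ are both affine in $W_i$ and perfectly collinear, giving $\rho = 1$ a.s.\ in agreement with the bound. I expect the main obstacle to be not the reduction itself but the careful bookkeeping around the two normalizations (the marginal $\sigma_B$ versus the per-vertex conditional $\sigma$), the degenerate $\lambda = 1$ endpoint, and the measure-zero argument ensuring $\rho$ is well defined.
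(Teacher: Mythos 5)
Your proposal is correct and follows essentially the same route as the paper: rewrite $R_{ij}=\exp(W_i - B_{ij})$ with $B_{ij}=(1-\lambda)(W_i+W_j)$, verify separability with $f(w)=(1-\lambda)w$ and $\sigma=(1-\lambda)\sigma_W$, compute $\sigma_B^2 = 2(1-\lambda)^2\sigma_W^2$, handle $\lambda=1$ separately, and invoke \cref{thm:rho}. Your explicit real-analyticity argument for the a.s.\ non-identical exit rates is a slightly more detailed justification of a step the paper leaves as ``easy to verify,'' but the substance is identical.
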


We emphasize the fact that the lower bound holds uniformly over all possible values of $\sigma_W^2$, hence all values of the inverse temperature $\beta$ as well. Considering that both the distribution and dynamics of the REM exhibit phase transitions as $\beta$ crosses a critical value of $\beta_c = \sqrt{2\log 2}$ \cite{PhysRevLett.88.087201}, it is interesting that $\beta$ plays no role in the correlation lower bound. This observation is reminiscent of the fact that the dynamical phase transition in the REM dynamics is not accompanied by a qualitative change in the spectral gap \cite{10.1214/aoap/1028903457}.

\begin{proof}[Proof of \cref{thm:rem}]
    We consider $\lambda \in [0,1)$, because the correlation trivially satisfies $\rho = 1$ a.s.\ when $\lambda = 1$. We can express the transition rates $R_{ij}$ in the Arrhenius-like form \eqref{eq:rates_w_b} by taking the barrier heights to be $B_{ij} = (1-\lambda) (W_i+W_j)$ for every $(i,j) \in E(G)$: 
    \[
    R_{ij} = \exp (\lambda W_i - (1-\lambda) W_j) = \exp (W_i - (1-\lambda) (W_i + W_j)) = \exp (W_i - B_{ij}).
    \]
    It is easy to verify that the corresponding exit rates $q_i = \sum_{j:\, j \sim i} R_{ij}$ are a.s.\ non-identical because the well depths $(W_i)_{i \in [n]}$ are i.i.d.\ $\mathcal{N}(0,\sigma_W^2)$ random variables. For the same reason, the barrier heights satisfy $B_{ij} = B_{ji} \sim \mathcal{N}(0,\sigma_B^2)$ with $\sigma_B = \sqrt{2}(1-\lambda)\sigma_W$. Furthermore, taking $f: \R \to \R$ to be $f(x) = (1-\lambda)x$ and $\sigma = \sigma_B/\sqrt{2}$, the barrier heights adjacent to $i$ satisfy
    \[
    (B_{ij}-f(W_i))_{j: \, j \sim i} = ((1-\lambda)W_j )_{j: \, j \sim i} \sim \mathcal{N}(\mathbf{0}, \sigma^2 \mathbf{I}_{d}),
    \]
    in terms of the degree $d$ of $G$. Since $B_{ij} - f(W_i)$ is independent of $W_i$ for every $i \in [n]$, $(\mathbf{W},\mathbf{B})$ is a separable, $(\sigma_W, \sigma_B)$-disordered energy landscape.  \cref{thm:rho} therefore implies that
    \[
    \E (\rho) \geq 1 - 8\left(1 + O(n^{-1/2}) \right) \left( \frac{\sigma_B}{\sigma_W} \right)^2 = 1 - 16 \left(1 + O(n^{-1/2}) \right) (1 - \lambda)^2.
    \]
\end{proof}

\subsubsection*{Organization}

The proof of \cref{thm:rho} has two main steps. The first step bounds below $\E (\rho)$ in terms of a  ratio of two variances, one concerning the well depths and the other concerning a function of the barrier heights (\cref{sec:proof_of_rho}). The second step calculates one variance and estimates the other using the Gaussian concentration inequality (\cref{sec:proof_of_r2}). In \cref{sec:proof of main result}, we combine the results from the two steps to prove \cref{thm:rho}.

\section{Relating the correlation to a ratio of variances}\label{sec:proof_of_rho}

The goal of this section is to prove the following proposition, which implements the first step of the proof of \cref{thm:rho}. We state it terms of a function $A_i$ of the barrier heights $(B_{ij})_{j : \, j \sim i}$, which plays an important role in the proof:
\begin{equation}\label{eq:def_of_A}
A_i = \log \sum_{j: \, j \sim i} \exp(-B_{ij}).
\end{equation}
    
\begin{proposition}\label{prop: first part}
    Let $G$ be a connected graph with $n \geq 2$ vertices. If $\mathbf{W} \in \R^n$ and $\mathbf{B} \in \R^{|E(G)|}$ are random vectors satisfying $B_{ij} = B_{ji}$ for all $(i,j) \in E(G)$ and such that the correlation $\rho = \rho (\mathbf{Q})$ of $\mathbf{Q} = \mathrm{Arr}_G (\mathbf{W},\mathbf{B})$ is a.s.\ well defined, then
    \begin{equation}\label{eq: first part bound}
        \E(\rho) \geq 1 - 2 \, \E \left( \frac{\Var A_I}{\Var W_I} \right),
    \end{equation}
    where $I \in [n]$ is uniformly random and independent of $\mathbf{W}$ and $\mathbf{B}$.
\end{proposition}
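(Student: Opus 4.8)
The plan is to reduce $\rho$ to the correlation between two explicit functions of a uniformly random state, and then to close the bound with an elementary two-variable inequality. First I would exploit reversibility: since $Q_{ij} = \exp(W_i - B_{ij})$ with $B_{ij} = B_{ji}$, the detailed balance equations $\pi_i Q_{ij} = \pi_j Q_{ji}$ collapse to $\pi_i e^{W_i} = \pi_j e^{W_j}$ for every edge $(i,j)$. Because $G$ is connected, this forces $\pi_i \propto e^{-W_i}$, so that $-\log \pi_i = W_i + c$ with $c = \log\sum_k e^{-W_k}$ independent of $i$. On the other hand, factoring $e^{W_i}$ out of the exit rate gives $q_i = e^{W_i}\sum_{j:\,j\sim i} e^{-B_{ij}}$, hence $\log q_i = W_i + A_i$ with $A_i$ as in \eqref{eq:def_of_A}. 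Since correlation is invariant under adding a state-independent constant, for each realization of $(\mathbf{W},\mathbf{B})$ we obtain, with the variance and covariance taken over the uniform state $I$,
\[
\rho = \Corr(-\log\pi_I,\, \log q_I) = \Corr(W_I,\, W_I + A_I).
\]
This reduction is the conceptual heart of the argument: it converts the global quantity $-\log\pi_i$ into the local well depth $W_i$ plus a constant, after which the claim is purely a statement about a single pair of random variables.

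It then remains to prove the deterministic inequality $\Corr(X, X+Z) \ge 1 - 2\,\Var Z/\Var X$ for random variables $X,Z$ on $[n]$ (here $X = W_I$ and $Z = A_I$) with $\Var X > 0$ and $\Var(X+Z) > 0$; taking $\E$ over the disorder $(\mathbf{W},\mathbf{B})$ then yields \eqref{eq: first part bound} directly, the hypothesis that $\rho$ is a.s.\ well defined guaranteeing these variances are a.s.\ positive and finite. To prove the inequality I would set $u = \sqrt{\Var Z/\Var X}\ge 0$ and $r = \Corr(X,Z)\in[-1,1]$, expand $\Cov(X,X+Z) = \Var X + \Cov(X,Z)$ and $\Var(X+Z) = \Var X + 2\Cov(X,Z) + \Var Z$, and divide through by $\Var X$ to get
\[
\Corr(X, X+Z) = \frac{1 + r u}{\sqrt{1 + 2 r u + u^2}}.
\]
Minimizing the right-hand side over $r\in[-1,1]$ with $u$ fixed, its derivative in $r$ has the sign of $r+u$, so the minimizer is $r = -u$ when $u\le 1$ (value $\sqrt{1-u^2}$) and the boundary $r=-1$ when $u>1$ (value $-1$). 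Hence $\Corr(X,X+Z) \ge \sqrt{1-u^2}$ for $u\le 1$ and $\Corr(X,X+Z)\ge -1$ otherwise. In either case this exceeds $1 - 2u^2 = 1 - 2\,\Var Z/\Var X$, because $\sqrt{1-u^2}\ge 1-2u^2$ on $[0,1]$ (square both sides when $u^2\le 1/2$; the estimate is trivial when $u^2>1/2$) and $-1\ge 1-2u^2$ when $u^2\ge 1$.

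The only genuine computation is this two-variable minimization, and the main thing to handle carefully is the sign-and-feasibility bookkeeping in the regime $u = \sqrt{\Var A_I/\Var W_I} > 1$: there the naive critical point $r=-u$ falls outside $[-1,1]$, so the bound cannot be read off from $\sqrt{1-u^2}$ and must instead be closed using the trivial estimate $\rho\ge -1$ together with $1-2u^2\le -1$. I would also note that the degenerate configuration $u=1$, $r=-1$, which makes $X+Z$ constant, is exactly the event excluded by the assumption that $\rho$ is a.s.\ well defined, so the displayed ratio is finite on the full-measure event under consideration. Taking expectations over $(\mathbf{W},\mathbf{B})$ then completes the proof; the intermediate bound $\rho \ge \sqrt{1-\Var A_I/\Var W_I}$, which is sharper than $1 - 2\,\Var A_I/\Var W_I$, suggests that the constant in \eqref{eq: first part bound} is not optimal.
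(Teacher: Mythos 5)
Your proof is correct. The first half---deriving $\pi_i \propto e^{-W_i}$ from reversibility, writing $\log q_i = W_i + A_i$, and reducing $\rho$ to the formula $(1+\wh\rho r)/\sqrt{1+2\wh\rho r+r^2}$ with $r=\sqrt{\Var A_I/\Var W_I}$ and $\wh\rho=\Corr(W_I,A_I)$---is exactly the paper's argument, including the minimization over $\wh\rho\in[-1,1]$ that yields $\sqrt{1-r^2}$ at the interior critical point for $r\le 1$ and the value $-1$ at the boundary for $r>1$. Where you genuinely diverge is in how you pass to the expectation. The paper does not establish a pointwise inequality: it splits $\rho$ into its nonnegative part $\rho_+=\rho\1_{\{\rho\ge 0\}}$ and the event $\{\rho<0\}$, proves $\E(\rho_+)\ge 1-\E(r^2)$ by conditioning on $\{r\le 1\}$ and rearranging $\E(r^2\mid r\le 1)$, bounds $\P(\rho<0)\le\E(r^2)$ via Markov's inequality, and adds the two losses to obtain the factor $2$. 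You instead verify the deterministic inequality $\rho\ge 1-2r^2$ realization by realization---using $\sqrt{1-u^2}\ge 1-2u^2$ on $[0,1]$ and $-1\ge 1-2u^2$ for $u\ge 1$---and take expectations in a single step. This is a cleaner finish: it removes the conditional-expectation bookkeeping, handles the case $\E(r^2)=\infty$ trivially, and makes transparent where the constant $2$ comes from (it is the price of dominating both $\sqrt{1-u^2}$ and $-1$ by one quadratic in $u$). Your closing observations about the degenerate configuration $u=1$, $\wh\rho=-1$ being excluded by the well-definedness hypothesis, and about the non-optimality of the constant, are accurate and consistent with the paper's own remark that the coefficient has not been optimized.
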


\cref{prop: first part} requires less of $G$, $\mathbf{W}$, and $\mathbf{B}$ than \cref{thm:rho}. In particular, $G$ does not need to be a regular graph, and the distributions of $\mathbf{W}$ and $\mathbf{B}$ are limited only by the requirement that $\rho$ is well defined. Note that $\rho$ is well defined if and only if both the stationary distribution $\bs{\pi}$ and exit rates $\mathbf{q}$ have non-identical entries. If $B_{ij} = B_{ji}$ for all $(i,j) \in E(G)$, then $\mathbf{Q}$ has the Boltzmann stationary distribution \eqref{eq:boltzmann}. In this case, $\bs{\pi}$ and $\mathbf{q}$ have non-identical entries if and only if $\mathbf{W}$ and $\mathbf{W} + \mathbf{A}$ do.

\begin{proof}[Proof of \cref{prop: first part}]
    For the moment, suppose that $\mathbf{W}$ and $\mathbf{B}$ are fixed vectors such that $B_{ij} = B_{ji}$ for every $(i,j) \in E(G)$ and the correlation $\rho$ exhibited by $\mathbf{Q} = \mathrm{Arr}_G(\mathbf{W},\mathbf{B})$ is well defined. We claim that 
    \begin{equation}\label{eq:rho_formula}
    \rho = \frac{1+\wh\rho r}{\sqrt{1+2\wh\rho r + r^2}},
    \end{equation}
    in terms of the quantities $\wh\rho$ and $r$, defined as
    \[
    \wh\rho = \Corr (W_I, A_I) \quad \text{and} \quad r = \sqrt{\frac{\Var A_I}{\Var W_I}}.
    \]
    
    Indeed, by the definition of the Arrhenius-like dynamics \eqref{eq:rates_w_b}, the exit rates of $\mathbf{Q}$ satisfy
    \[
    q_i = \sum_{j : \, j \sim i} Q_{ij} = e^{W_i} \sum_{j : \, j \sim i} e^{-B_{ij}} = e^{W_i + A_i}.
    \]
    Moreover, since $\mathbf{B}$ is symmetric, the stationary distribution of $\mathbf{Q}$ is the Boltzmann distribution $\pi_i \propto \exp(-W_i)$. The correlation therefore equals
    \[
    \rho = \Corr (-\log \pi_I, \log q_I) = \Corr (W_I, W_I + A_I).
    \]
    To verify \cref{eq:rho_formula}, we use the definition of correlation and linearity of covariance, and then identify factors of $\wh\rho$ and $r$ in the resulting expression: 
    \begin{align*}
    \Corr (W_I, W_I + A_I) &= \frac{\Var W_I + \Cov(W_I,A_I)}{\sqrt{(\Var W_I) ( \Var W_I + 2 \Cov (W_I, A_I) + \Var A_I)}}\\ 
    &= \frac{1+\wh\rho r}{\sqrt{1+2\wh\rho r + r^2}}.
    \end{align*}

    Next, let $\mathbf{W}$ and $\mathbf{B}$ be random vectors that satisfy the hypotheses. To complete the proof, we must show that 
    \begin{equation}\label{eq:rho 2r2}
    \E (\rho) \geq 1 - 2\, \E (r^2).
    \end{equation}
    Let $\rho_+ = \rho \1_{\{\rho \, \geq \, 0\}}$ denote  the nonnegative part of the correlation. We claim that
    \[
    \E (\rho_+) \geq 1 - \E (r^2) \quad \text{and} \quad \P(\rho < 0) \leq \E(r^2).
    \]
    These two bounds together imply \cref{eq:rho 2r2} because $\rho \geq -1$:
    \[
    \E (\rho) = \E (\rho_+) + \E (\rho \1_{\{\rho < 0\}}) \geq \E (\rho_+) - \P (\rho < 0) \geq 1 - 2 \, \E (r^2).
    \]
    We address the lower bound of $\E(\rho_+)$ first. 
    
    By assumption, $\rho$ is a.s.\ well defined, hence $\rho$ satisfies \cref{eq:rho_formula}. By \cref{eq:rho_formula}, when $r \leq 1$, $\rho$ is nonnegative and minimized by $\wh\rho = -r$, which implies the bound
    \[
    \rho_+^2 \geq 1-r^2.
    \]
    However, $r$ is random and can take values greater than $1$, so we must first condition on the occurrence of $\{r \leq 1\}$ to apply it. In terms of the probability $p = \P (r \leq 1)$, we find that
    \begin{equation}\label{eq:conditionalErho}
    \E (\rho_+^2) \geq \E (\rho_+^2 \mid r \leq 1) \, p \geq \left( 1 - \E ( r^2 \mid r \leq 1 ) \right) p.
    \end{equation}
    To bound above the conditional expectation of $r^2$, we note that
    \[
    \E (r^2) = \E (r^2 \mid r \leq 1) \, p + \E (r^2 \mid r > 1) (1-p) \geq \E (r^2 \mid r \leq 1) \, p + (1-p).
    \]
    Rearranging this expression gives
    \[
    \E (r^2 \mid r \leq 1) \leq \frac{1}{p}\big(\E (r^2) - (1-p)\big).
    \]
    We substitute this bound into \cref{eq:conditionalErho} to conclude that
    \[
    \E (\rho_+^2) \geq \left(1 - \frac{1}{p}\big(\E (r^2) - (1-p)\big) \right) p = 1 - \E (r^2).
    \]
    Since $\rho_+ \in [0,1]$, $\E(\rho_+)$ is at least $\E(\rho_+^2)$, which proves that $\E(\rho_+) \geq 1 - \E(r^2)$.

    To conclude, we note that the probability $\P(\rho < 0)$ is at most $\E(r^2)$ because 
    \[
    \P (\rho < 0) \leq \P (r > 1) = \P (r^2 > 1) \leq \E(r^2).
    \]
    The first inequality holds because \cref{eq:rho_formula} implies that $\rho \geq 0$ whenever $r \leq 1$. The second inequality is due to Markov's inequality.
\end{proof}

\section{Bounding the expected ratio of variances}\label{sec:proof_of_r2}

Recall that $A_i$ is the function of the barrier heights introduced in \cref{eq:def_of_A}, and $I$ is a uniformly random state that is independent of all other randomness. The goal of this section is to prove an upper bound on the expectation of $r^2 = (\Var A_I)/(\Var W_I)$. Substituting this bound into \cref{prop: first part} will imply \cref{thm:rho}.

Bounding above $\E(r^2)$ is difficult for two reasons. First, the quantities $\Var W_I$ and $\Var A_I$ are not necessarily independent. Second, while $\Var W_I$ is the sample variance of i.i.d.\ Gaussian random variables, which has a known distribution, $\Var A_I$ is an apparently complicated function of (possibly dependent) barrier heights. To address the first difficulty, because $\Var W_I$ is the simpler of the two quantities, we replace $\mathbf{W}$ with an i.i.d.\ copy $\mathbf{W}'$ and bound the associated error:
\begin{equation}\label{eq:split_r^2}
\E (r^2) = \E \left( \frac{\Var A_I}{\Var W_I}\right) = \E (\Var A_I) \cdot \E \left(\frac{1}{\Var W_I} \right) + \E \left( \Var A_I \left( \frac{1}{\Var W_I} - \frac{1}{\Var W_I'} \right) \right).
\end{equation}
The difference of reciprocals
\[
D_I = \frac{1}{\Var W_I} - \frac{1}{\Var W_I'}
\]
satisfies $\E (D_I^2) = 2 \V (1/\Var W_I)$, where $\V$ denotes variance with respect to all randomness, in contrast to $\Var (\cdot) = \V ( \cdot \mid (\mathbf{W},\mathbf{B}))$. Hence, by applying the Cauchy--Schwarz inequality to the term $\E ( (\Var A_I) D_I)$ in \cref{eq:split_r^2}, we find that
\begin{equation}\label{eq:r2_starting_point}
\E (r^2) \leq \E (\Var A_I) \cdot \E \left( (\Var W_I)^{-1} \right) + \sqrt{\, \E\big( (\Var A_I)^2 \big) \cdot 2 \,\V \big( (\Var W_I)^{-1} \big)}.
\end{equation}
The next two subsections address the factors involving $\Var W_I$ and $\Var A_I$, respectively. We combine them in the last subsection.

\subsection{The inverse distribution of \texorpdfstring{$\Var W_I$}{Var WI}}

In the statement and proof of the following lemma, we use $\mathrm{InvGam}(a,b)$ to denote the distribution of an inverse gamma random variable with shape and scale parameters $a$ and $b$, both positive real numbers. For reference, if $U \sim \mathrm{InvGam}(a,b)$, then the probability density function of $U$ is
\[
\frac{b^a}{\Gamma (a)} x^{-a-1} e^{-b/x}, \quad x \in (0,\infty),
\]
which implies that $\E (U) = b/(a-1)$ for $a > 1$ and $\V (U) = b^2/(a-1)^2 (a-2)$ for $a > 2$. The fact that $a > 2$ is required for $\V(U)$ to be finite underlies the condition in \cref{thm:rho} that the adjacency graph $G$ must have $n \geq 6$ vertices.

\begin{lemma}\label{lem:invvar}
Let $n \geq 6$ be an integer and $\sigma_W > 0$. If $\mathbf{W} \sim \mathcal{N}(\mathbf{0},\sigma_W^2 \mathbf{I}_n)$ and $I \in [n]$ is independent and uniformly random, then the distribution of $U = (\Var W_I)^{-1}$ is $\mathrm{InvGam}\big((n-1)/2,n/(2\sigma_W^2)\big)$. In particular, the mean and variance of $U$ are
    \[
    \E (U) = \frac{n}{(n-3) \sigma_W^2} \quad \text{and} \quad \V (U) = \frac{2n^2}{(n-3)^2 (n-5)\sigma_W^4}.
    \]
\end{lemma}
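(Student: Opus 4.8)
The plan is to identify the exact distribution of $\Var W_I$ and then invert it. Since $I$ is uniform on $[n]$ and independent of $\mathbf{W}$, the conditional variance $\Var W_I = \V(W_I \mid \mathbf{W})$ is the \emph{sample variance} of the $n$ i.i.d.\ observations $W_1,\dots,W_n$, computed with the $1/n$ normalization: namely $\Var W_I = \frac{1}{n}\sum_{i=1}^n (W_i - \bar W)^2$, where $\bar W = \frac 1n \sum_i W_i$. This is the key structural observation, and everything else is a standard consequence of the $\chi^2$ distribution of the Gaussian sample variance.

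The first step is to recall the classical fact that, for i.i.d.\ $\mathcal{N}(0,\sigma_W^2)$ samples, the quantity $\sum_{i=1}^n (W_i - \bar W)^2 / \sigma_W^2$ has a chi-squared distribution with $n-1$ degrees of freedom, equivalently $\mathrm{Gam}\big((n-1)/2, 1/2\big)$ in the shape--rate parameterization. Rescaling, $\Var W_I = \frac{\sigma_W^2}{n}\cdot \chi^2_{n-1}$ is $\mathrm{Gam}\big((n-1)/2,\, n/(2\sigma_W^2)\big)$ in shape--scale form (scale $\sigma_W^2/n$ times the unit-scale gamma). The second step is to invert: if a random variable is $\mathrm{Gam}(a,\theta)$ with scale $\theta$, its reciprocal is $\mathrm{InvGam}(a, 1/\theta)$. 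Here $a = (n-1)/2$ and the gamma scale is $\sigma_W^2/n$, so $1/\theta = n/(2\sigma_W^2)$, giving $U = (\Var W_I)^{-1} \sim \mathrm{InvGam}\big((n-1)/2,\, n/(2\sigma_W^2)\big)$, as claimed. One must be slightly careful to keep the shape/rate/scale conventions straight, matching the density stated just before the lemma.

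The third step is to read off the mean and variance from the inverse-gamma moment formulas quoted in the excerpt, $\E(U) = b/(a-1)$ and $\V(U) = b^2 / \big((a-1)^2(a-2)\big)$, with $a = (n-1)/2$ and $b = n/(2\sigma_W^2)$. For the mean, $a-1 = (n-3)/2$, so $\E(U) = \frac{n/(2\sigma_W^2)}{(n-3)/2} = \frac{n}{(n-3)\sigma_W^2}$. For the variance, $a-2 = (n-5)/2$, so $\V(U) = \frac{n^2/(4\sigma_W^4)}{((n-3)/2)^2 \cdot (n-5)/2} = \frac{n^2/(4\sigma_W^4)}{(n-3)^2(n-5)/8} = \frac{2n^2}{(n-3)^2(n-5)\sigma_W^4}$. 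The requirement $a > 2$, i.e.\ $n > 5$, i.e.\ $n \geq 6$, is precisely what guarantees $\V(U)$ is finite, explaining the hypothesis.

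I do not anticipate a serious obstacle here; this is a routine distributional computation. The only point requiring care is verifying the initial structural claim that $\Var W_I$ (the conditional variance given the disorder, with the $1/n$ convention) coincides with the biased sample variance, and then scrupulously tracking the shape/rate/scale conventions through the gamma-to-inverse-gamma inversion so the parameters land exactly as stated. Once those are pinned down, the moment formulas supplied in the excerpt finish the proof immediately.
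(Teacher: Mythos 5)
Your proof is correct and follows essentially the same route as the paper's: identify $\Var W_I$ as the $1/n$-normalized sample variance, invoke the fact that $\sum_{i}(W_i-\bar W)^2/\sigma_W^2 \sim \chi^2_{n-1}$, invert the resulting gamma distribution to an inverse gamma, and read off the moments from the formulas preceding the lemma. The only blemish is a bookkeeping inconsistency in your prose: you call the gamma scale of $\Var W_I$ ``$\sigma_W^2/n$'' yet take its reciprocal to be $n/(2\sigma_W^2)$; since $\chi^2_{n-1}$ has scale $2$, the scale of $\Var W_I$ is actually $2\sigma_W^2/n$, which makes your final parameters (and hence the stated mean and variance) correct.
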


\begin{proof}
    Due to the independence of the well depths $W_i$ and because $I$ is uniformly random, the quantity $(n/(n-1))\Var W_I$ is the (unbiased) sample variance of $n$ i.i.d.\ $\mathcal{N}(0,\sigma_W^2)$ random variables. It is well known that scaling by $(n-1)/\sigma_W^2$ produces a chi-squared random variable with $n-1$ degrees of freedom:
    \[
    \frac{n-1}{\sigma_W^2} \cdot \frac{n}{n-1} \Var W_I \stackrel{d}{=} \chi_{n-1}^2.
    \]
    Using the scaling properties of gamma random variables, we find that
    \[
    U = \frac{1}{\Var W_I} \stackrel{d}{=} \frac{n}{\sigma_W^2 \chi_{n-1}^2} \stackrel{d}{=} \,\, \mathrm{InvGam} \left( \frac{n-1}{2}, \frac{n}{2\sigma_W^2} \right).
    \]
    The expressions for the expectation and variance of $U$ follow from the formulas preceding the statement of the lemma. In particular, we require $(n-1)/2 > 2$, hence $n \geq 6$, to ensure that the variance of $U$ is finite.
\end{proof}

\subsection{Bounds on the moments of \texorpdfstring{$\Var A_I$}{Var AI}}

Next, we consider the factors in \cref{eq:r2_starting_point} involving $\Var A_I$. According to the definition of a separable disordered energy landscape (\cref{def:separable}), there are $f: \R \to \R$ and $\sigma > 0$ such that, for every state $i \in [n]$, the shifted barrier heights $(B_{ij} - f(W_i))_{j: \, j \sim i}$ are i.i.d.\ $\mathcal{N}(0,\sigma^2)$ random variables. To use this property, we will denote $Z_i = f(W_i)$ and write $A_i$ as 
\begin{equation}\label{eq: a is lse}
A_i = \log \sum_{j: \, j \sim i} e^{-B_{ij}} = \log \sum_{j: \, j \sim i} e^{Z_i - B_{ij}} - Z_i.
\end{equation}
Expressing $A_i$ in this way is useful because the ``log-sum-exp'' function in \cref{eq: a is lse} is Lipschitz, as we elaborate below.

For every integer $d \geq 1$, we define the log-sum-exp function $\lse: \R^d \to \R$ by
\begin{equation}\label{eq:logsumexp}
\lse (\mathbf{x}) = \log \sum_{i \in [d]} e^{x_i}.
\end{equation}
Additionally, for a real number $L \geq 0$ and an integer $d \geq 1$, we say that $f: \R^d \to \R$ is $L$-Lipschitz (with respect to the $L^2$-norm) if 
\[
\forall \ \mathbf{x}, \mathbf{y} \in \R^d, \quad |f(\mathbf{x}) - f(\mathbf{y})| \leq \| \mathbf{x} - \mathbf{y}\|_2.
\] 

\begin{lemma}\label{lem:logsumexp partial bd}
For every integer $d \geq 1$, $\lse$ is $1$-Lipschitz with respect to the $L^2$ norm.
\end{lemma}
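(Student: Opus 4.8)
The plan is to show that $\lse$ is $1$-Lipschitz by bounding the Euclidean norm of its gradient by $1$ at every point, then invoking the standard fact that a continuously differentiable function whose gradient has norm at most $1$ everywhere is $1$-Lipschitz (via the mean value theorem applied along the segment joining any two points). First I would compute the partial derivatives of $\lse$. Writing $S(\mathbf{x}) = \sum_{k \in [d]} e^{x_k}$, a direct computation gives
\[
\frac{\partial}{\partial x_i} \lse (\mathbf{x}) = \frac{e^{x_i}}{S(\mathbf{x})} =: p_i(\mathbf{x}).
\]
The key observation is that the gradient $\nabla \lse(\mathbf{x}) = (p_1(\mathbf{x}), \dots, p_d(\mathbf{x}))$ is a probability vector: each $p_i(\mathbf{x})$ is nonnegative and they sum to $1$.

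Next I would bound the Euclidean norm of this gradient. Since the $p_i$ are nonnegative and sum to $1$, we have
\[
\| \nabla \lse (\mathbf{x}) \|_2^2 = \sum_{i \in [d]} p_i(\mathbf{x})^2 \leq \sum_{i \in [d]} p_i(\mathbf{x}) = 1,
\]
where the inequality uses $p_i^2 \leq p_i$ for each $p_i \in [0,1]$. Hence $\| \nabla \lse (\mathbf{x}) \|_2 \leq 1$ for every $\mathbf{x} \in \R^d$. Finally, for any $\mathbf{x}, \mathbf{y} \in \R^d$, applying the fundamental theorem of calculus to $t \mapsto \lse(\mathbf{x} + t(\mathbf{y} - \mathbf{x}))$ on $[0,1]$ and then Cauchy--Schwarz gives
\[
|\lse(\mathbf{x}) - \lse(\mathbf{y})| = \left| \int_0^1 \nabla \lse(\mathbf{x} + t(\mathbf{y}-\mathbf{x})) \cdot (\mathbf{y} - \mathbf{x}) \, dt \right| \leq \| \mathbf{y} - \mathbf{x} \|_2,
\]
which is the desired Lipschitz bound.

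This proof is entirely routine, so I do not anticipate a genuine obstacle; the only point requiring a moment's care is the elementary inequality $\sum_i p_i^2 \leq \sum_i p_i$ for a probability vector, which is what pins the Lipschitz constant at exactly $1$ (rather than some dimension-dependent quantity). An alternative, calculus-free route would be to write $\lse(\mathbf{y}) - \lse(\mathbf{x}) = \log \frac{\sum_k e^{y_k}}{\sum_k e^{x_k}}$ and bound the ratio using $e^{y_k} \leq e^{x_k} e^{\|\mathbf{y}-\mathbf{x}\|_\infty}$, but that yields an $L^\infty$ bound rather than the $L^2$ bound we want, so the gradient argument is cleaner for the stated norm.
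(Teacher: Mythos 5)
Your proof is correct and follows essentially the same route as the paper's: both arguments reduce the claim to the observation that $\nabla \lse(\mathbf{x})$ is a probability vector, hence $\|\nabla \lse(\mathbf{x})\|_2^2 = \sum_i p_i^2 \leq \sum_i p_i = 1$, and then conclude via Cauchy--Schwarz. The only cosmetic difference is that you integrate the gradient along the segment (fundamental theorem of calculus) whereas the paper invokes the mean value theorem to produce a single intermediate point $\mathbf{z}$; the two are interchangeable here.
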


\begin{proof}
    By the intermediate value theorem, for every $\mathbf{x}, \mathbf{y} \in \R^d$, there exists some $\mathbf{z} \in \R^d$ such that $|\lse(\mathbf{x}) - \lse(\mathbf{y})| = |\nabla \lse(\mathbf{z}) \cdot (\mathbf{x} - \mathbf{y})|$. The partial derivatives of $\lse$ lie in $[0,1]$ and sum to $1$, so $\|\nabla \lse(\mathbf{z})\|_2 \leq 1$. The Cauchy--Schwarz inequality then implies that
    \[
    |\lse(\mathbf{x}) - \lse(\mathbf{y})| \leq \| \nabla \lse (\mathbf{z})\|_2 \, \|\mathbf{x} - \mathbf{y}\|_2 \leq \|\mathbf{x} - \mathbf{y}\|_2.
    \]
\end{proof}

Lipschitz functions of i.i.d.\ Gaussians exhibit dimension-free concentration according to the Gaussian concentration inequality (e.g., \cite[Theorem~5.2.2]{Vershynin_2018}). Being a standard result, we state it without proof.

\begin{lemma}\label{lem:gci}
    Let $d \geq 1$ be an integer and $\sigma > 0$. If $\g \sim \mathcal{N}(\mathbf{0},\sigma^2 \mathbf{I}_d)$ and $f: \R^d \to \R$ is $1$-Lipschitz, then
    \[
    \P (|f(\g) - \E f(\g)| \geq t) \leq 2 e^{-t^2/(2\sigma^2)}, \quad \quad t \geq 0.
    \]
\end{lemma}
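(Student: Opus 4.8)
This is the classical Gaussian concentration inequality, so the plan is to reproduce one of its standard proofs. The cleanest route is to first establish the stronger sub-Gaussian moment bound
\[
\E \exp\!\big(\lambda (f(\g) - \E f(\g))\big) \leq \exp\!\big(\tfrac{1}{2}\sigma^2 \lambda^2\big) \quad \text{for all } \lambda \in \R,
\]
and then convert it into a tail bound. The conversion is routine: by Markov's inequality, $\P(f(\g)-\E f(\g)\geq t) \leq e^{-\lambda t}\,\E \exp(\lambda(f(\g)-\E f(\g))) \leq e^{-\lambda t + \sigma^2\lambda^2/2}$ for every $\lambda > 0$, and choosing $\lambda = t/\sigma^2$ gives the one-sided bound $e^{-t^2/(2\sigma^2)}$. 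Because $-f$ is also $1$-Lipschitz, the identical argument controls the lower tail $\P(f(\g)-\E f(\g)\leq -t)$, and a union bound over the two tails supplies the factor of $2$ in the statement.

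The substance lies in the moment bound, which I would prove via the Herbst argument driven by the Gaussian logarithmic Sobolev inequality. After subtracting the mean so that $\E f(\g)=0$, and smoothing $f$ as described below, set $H(\lambda) = \E\exp(\lambda f(\g))$ and apply the log-Sobolev inequality for $\mathcal{N}(\mathbf{0},\sigma^2\mathbf{I}_d)$, whose constant is $\sigma^2$, to the test function $g = \exp(\lambda f/2)$. Since $\|\nabla f\|_2 \leq 1$ pointwise, the Dirichlet energy is bounded by $\tfrac14\lambda^2 H(\lambda)$, and the log-Sobolev inequality reduces to the differential inequality $\lambda H'(\lambda) - H(\lambda)\log H(\lambda) \leq \tfrac12\sigma^2\lambda^2 H(\lambda)$. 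Dividing through by $\lambda^2 H(\lambda)$ shows that $K(\lambda) = \lambda^{-1}\log H(\lambda)$ satisfies $K'(\lambda)\leq \tfrac12\sigma^2$; since $K(\lambda)\to \E f(\g) = 0$ as $\lambda\to 0$, integrating yields $\log H(\lambda)\leq \tfrac12\sigma^2\lambda^2$, which is exactly the required bound.

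The main obstacle is supplying the log-Sobolev inequality, the one genuinely nonelementary ingredient; I would either invoke it as a standard fact or derive it by tensorization, reducing the $d$-dimensional inequality to the one-dimensional case through the subadditivity of entropy and proving the latter via hypercontractivity of the Ornstein--Uhlenbeck semigroup. A secondary point is the smoothing step: convolving $f$ with a narrow Gaussian mollifier yields a smooth function with the same Lipschitz constant, and since Lipschitz functions grow at most linearly, dominated convergence lets both $\E f(\g)$ and the tail estimates pass to the limit as the mollification is removed. If one is willing to accept a suboptimal constant, the log-Sobolev machinery can be bypassed entirely by interpolating between independent copies $\g$ and $\tilde\g$ along $\g_\theta = \g\sin\theta + \tilde\g\cos\theta$ and exploiting that $\g_\theta$ and $\tfrac{d}{d\theta}\g_\theta$ are independent with the same law; this elementary argument gives a sub-Gaussian bound with exponent constant $\pi^2/8$ in place of $1/2$, so recovering the sharp constant appears to require the log-Sobolev route.
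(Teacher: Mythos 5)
The paper deliberately gives no proof of this lemma: it is stated as a standard result with a citation to \cite[Theorem~5.2.2]{Vershynin_2018}, so there is no in-paper argument to compare against. Your proof is a correct and complete rendition of one of the standard proofs. The Chernoff conversion from the sub-Gaussian moment generating function bound to the two-sided tail bound (with the factor of $2$ coming from applying the one-sided bound to both $f$ and $-f$) is right, and the Herbst argument is carried out correctly: with $H(\lambda)=\E e^{\lambda f(\g)}$ and $\E f(\g)=0$, the Gaussian log-Sobolev inequality applied to $g=e^{\lambda f/2}$ together with $\|\nabla f\|_2\le 1$ gives $\lambda H'-H\log H\le\tfrac12\sigma^2\lambda^2 H$, and the substitution $K(\lambda)=\lambda^{-1}\log H(\lambda)$ with $K(0^+)=\E f(\g)=0$ integrates to the desired $\log H(\lambda)\le\tfrac12\sigma^2\lambda^2$. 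You also correctly flag the two technical points that are often glossed over, namely that $f$ must be mollified before $\nabla f$ can be used pointwise, and that the log-Sobolev inequality itself is the one nonelementary input. The only quibble is with your closing aside: the sharp constant does not actually require the log-Sobolev route; for instance, the semigroup (smart-path) interpolation argument and the Bobkov--G\"otze transport approach also yield the exponent $t^2/(2\sigma^2)$. This does not affect the validity of your proof, since the weaker $\pi^2/8$ bound is offered only as an optional elementary fallback and the main line of argument achieves the stated constant.
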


We will use \cref{lem:gci} to obtain bounds on the central moments of $A_1$ that do not depend on the dimension $d$. In our setting, the dimension is $d-1$, where $d$ is the degree of the adjacency graph $G$. This partly explains why the conclusion of \cref{thm:rho} holds regardless of $d$.

\begin{lemma}\label{lem:vargci}
    Let $d \geq 1$ be an integer, let $\sigma > 0$, and let $f: \R^d \to \R$ be $1$-Lipschitz. If $\g \sim \mathcal{N}(\mathbf{0},\sigma^2 \mathbf{I}_d)$, then, for every integer $m \geq 1$, the $m$th central moment of $f(\g)$ satisfies
    \[
    \mu_m (f(\g)) = \E \left( | f(\g) - \E f(\g) |^m \right) \leq m \, \Gamma (m/2) (2\sigma^2)^{m/2}.
    \]
\end{lemma}

\begin{proof}
    By the tail integral formula and \cref{lem:gci}, the central moments of $f$ satisfy
    \[
    \mu_m (f (\g)) = \int_0^\infty \P \left( |f(\g) - \E f(\g)|^m \geq t \right) dt \leq \int_0^\infty 2 e^{-t^{2/m}/(2 \sigma^2)} dt = m \, \Gamma (m/2) (2\sigma^2)^{m/2}.
    \]
\end{proof}

While we could use \cref{lem:vargci} to bound the central moments of $A_1$, these moments are not the same as the quantities $\E (\Var A_I)$ and $\E ( (\Var A_I)^2)$ that appear in \cref{eq:r2_starting_point}. Intuitively, the variance of $A_I$ over a fixed landscape should be closely related to the variance of $A_1$ over different realizations of the landscape, because the distribution of barrier heights adjacent to a state $i$ is the same for every $i$, and $I$ is independent of the landscape. This is the idea behind the proof of the next result, which will allow us to bound the first two moments of $\Var A_I$ using the second and fourth central moments of $A_1$. We accept a crude bound on the second moment of $\Var A_I$, because its coefficient will matter little to our main result.

\begin{lemma}\label{lem:replace_var}
Let $n \geq 1$ be an integer. If $\mathbf{X} = (X_i)_{i \in [n]} \in \R^n$ is a vector of identically distributed, square-integrable random variables, and if $I \in [n]$ is uniformly random, independently of $\mathbf{X}$, then 
\begin{equation}\label{eq:replace_var_bds}
\E (\Var X_I) \leq \V (X_1) \quad \text{and} \quad \E \big( (\Var X_I)^2 \big) \leq 40 \mu_4 (X_1).
\end{equation}
\end{lemma}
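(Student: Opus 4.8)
The plan is to exploit the fact that $\Var X_I$, being the variance with respect to the uniform index $I$ for a fixed realization of $\mathbf{X}$, is exactly the empirical variance $\Var X_I = \frac{1}{n}\sum_{i \in [n]} (X_i - \bar X)^2$, where $\bar X = \frac{1}{n}\sum_i X_i$ is the empirical mean. The elementary fact I would lean on is that the empirical mean minimizes the sum of squared deviations: for any deterministic constant $c$, $\frac{1}{n}\sum_i (X_i - \bar X)^2 \leq \frac{1}{n}\sum_i (X_i - c)^2$. Taking $c = \mu := \E X_1$, the common mean of the identically distributed $X_i$, yields the pathwise sandwich $0 \leq \Var X_I \leq \frac{1}{n}\sum_i (X_i - \mu)^2$. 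Replacing the random centering $\bar X$ by the deterministic $\mu$ is the key maneuver, since it converts a quantity built from the realization into an ordinary average whose expectation is governed solely by the common marginal law.

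For the first bound I take the expectation of the right-hand inequality and use that the $X_i$ are identically distributed, so $\E[(X_i - \mu)^2] = \V(X_1)$ for every $i$; the average over $i$ then collapses and gives $\E(\Var X_I) \leq \V(X_1)$ directly.

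For the second bound I square the pathwise inequality, which is valid because both sides are nonnegative, to obtain $(\Var X_I)^2 \leq \big(\frac{1}{n}\sum_i (X_i - \mu)^2\big)^2$, and then apply Jensen's inequality (convexity of $t \mapsto t^2$, equivalently Cauchy--Schwarz) to the average, so that $\big(\frac{1}{n}\sum_i (X_i - \mu)^2\big)^2 \leq \frac{1}{n}\sum_i (X_i - \mu)^4$. Taking expectations and again invoking identical distribution, $\E[(X_i - \mu)^4] = \mu_4(X_1)$, gives $\E\big((\Var X_I)^2\big) \leq \mu_4(X_1)$, which is in fact stronger than the stated bound with constant $40$ and therefore implies it.

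The only place where care is genuinely needed is the opening observation that the empirical variance is dominated by squared deviations about the fixed true mean $\mu$ rather than about $\bar X$. This is what allows the argument to sidestep the (possibly strong) dependence among the $X_i$ entirely: after the replacement, both bounds reduce to one-line applications of Jensen's inequality together with the fact that each $X_i$ has the same central moments as $X_1$, so that no independence hypothesis is invoked anywhere.
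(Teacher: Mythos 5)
Your proof is correct, and it takes a genuinely different and cleaner route than the paper's. Your key move is to center at the true mean $\mu = \E X_1$ from the outset, via the variational property of the empirical mean, which gives the pathwise domination $\Var X_I \leq \frac{1}{n}\sum_i (X_i-\mu)^2$; after that, both bounds are immediate from Jensen's inequality and the identical-distribution hypothesis, and your second bound $\E\big((\Var X_I)^2\big) \leq \mu_4(X_1)$ is in fact sharper than the stated $40\,\mu_4(X_1)$ by a factor of $40$. The paper instead proves the first bound by applying the law of total variance twice (conditioning on $\mathbf{X}$ versus on $I$), which yields the exact identity $\E(\Var X_I) = \V(X_1) - \V\big(\E(X_I \mid \mathbf{X})\big)$; this identifies the precise deficit $\V(\bar X)$, which is what \cref{rem:degree} has in mind when it suggests degree-dependent improvements via the covariances of the $A_i$. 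For the second bound the paper keeps the centering at $\bar X$, expands the square, applies Cauchy--Schwarz, and only then swaps $\bar X$ for $\mu$ using $|a+b|^4 \leq 8(|a|^4+|b|^4)$ together with a crude multinomial bound on $\mu_4(\bar X)$ --- that detour is exactly where the constant $40$ comes from, and your argument shows it is unnecessary. Since the authors state that the coefficient in the second bound ``matters little'' to the main results, nothing downstream changes, but your version is both shorter and quantitatively stronger; the only thing it gives up is the exact-deficit identity in the first bound.
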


\begin{proof}
    We prove the first bound in \cref{eq:replace_var_bds} by applying the law of total variance to the variance of $X_I$. Since $X_I$ is square-integrable, the law of total variance implies both
    \begin{align}
    \V (X_I) &= \E \big( \V (X_I \,|\, \X) \big) + \V \big( \E (X_I \,|\, \X) \big) \quad \text{and} \label{eq:lotv1}\\ 
    &= \E \big(\V (X_I \,|\, I)\big) + \V \big(\E (X_I \,|\, I) \big).\label{eq:lotv2}
    \end{align}
    Note that the quantity $\E (\V (X_I \,|\, \X))$ in \cref{eq:lotv1} is the one the first bound concerns, which we previously denoted as $\E (\Var X_I)$. Since $I$ is independent of the $X_i$, which have identical distributions, the terms on the right-hand side of \cref{eq:lotv2} satisfy
    \[
    \E \big( \V (X_I \mid I) \big) = \V (X_1) \quad \text{and} \quad \V \big( \E (X_I \mid I) \big) = 0.
    \]
    The first bound in \cref{eq:replace_var_bds} follows from substituting the preceding equalities into \cref{eq:lotv2} and then by using the nonnegativity of variance:
    \[
    \E \big( \V (X_I \mid \X) \big) = \V (X_1) - \V \big( \E (X_I \mid \X) \big) \leq \V (X_1).
    \]

    We turn to proving the second bound in \cref{eq:replace_var_bds}. We first note that, because $I$ is uniformly random, we can write $(\Var X_I)^2$ as
    \begin{equation}\label{eq:var2}
    (\Var X_I)^2 = \frac{1}{n^2} \sum_{i \in [n]} (X_i - \bar{X})^4 + \frac{2}{n^2} \sum_{1 \leq i < j \leq n} (X_i - \bar{X})^2 (X_j - \bar{X})^2,
    \end{equation}
    where $\bar{X}$ denotes the arithmetic mean $\frac{1}{n} \sum_{i \in [n]} X_i$. Since the $X_i$ are identically distributed, by applying the Cauchy--Schwarz inequality to the summands of the second term in \cref{eq:var2}, we find that
    \[
    \E \left( (\Var X_I)^2 \right) \leq \frac{1}{n^2} \sum_{i \in [n]} \E \left((X_i - \bar{X})^4\right) + \frac{2}{n^2} \sum_{1 \leq i < j \leq n} \E \left((X_i - \bar{X})^4\right) = \E \left( (X_1 - \bar{X})^4 \right).
    \]
    This is not quite the second bound in \cref{eq:replace_var_bds}, because $\bar{X}$ is not the same as its mean $\mu = \E X_1$. We substitute $X_i - \bar{X} = (X_i - \mu) + (\mu - \bar{X})$ into the preceding bound and use the inequality
    \begin{equation}\label{eq: abpower}
    |a+b|^k \leq 2^{k-1} (|a|^k + |b|^k), 
    \end{equation}
    which holds for all real numbers $a$, $b$, and $k \geq 1$, to find that
    \[
    \E \left( (\Var X_I)^2 \right) \leq 8 \left( \E \left( (X_1 - \mu)^4 \right) + \E \left( (\bar{X} - \mu)^4 \right) \right) = 8 (\mu_4 (X_1) + \mu_4 (\bar{X})).
    \]
    To address the second term on the right-hand side, we note that $\bar{X} - \mu = \frac{1}{n} \sum_{i \in [n]} Y_i$, where $Y_i = X_i - \mu$. Using the multinomial theorem, we can expand $\mu_4 (\bar{X})$ as
    \[
    \mu_4 (\bar{X}) = \frac{1}{n^4} \sum_{\substack{k_1+k_2+\, \cdots\, + k_n = 4\\ k_1, k_2, \,\dots,\, k_n \geq 0}} \binom{4}{k_1, k_2, \dots, k_n} \, \E \Bigg(\prod_{l \in [n]} Y_l^{k_l} \Bigg).
    \]
    Since the $Y_i$ are identically distributed, we can crudely bound expectations of their products by
    \[
    \E \Bigg(\prod_{l \in [n]} Y_l^{k_l} \Bigg) \leq 4 \, \E (Y_1^4) = 4 \, \E \left( (X_1 - \mu)^4 \right) = 4\, \mu_4 (X_1).
    \]
    Lastly, because the multinomial coefficients sum to $n^4$, the preceding bounds together imply that
    \[
    \E \left( (\Var X_I)^2 \right) \leq 8 (\mu_4 (X) + \mu_4 (\bar{X})) \leq 40\, \mu_4 (X_1).
    \]
\end{proof}

\begin{remark}\label{rem:degree}
   Although the estimates of \cref{thm:rho,thm:rem} do not depend on the degree of $G$, the simulations in \cref{fig2,fig3} show that higher degree favors higher correlation. It should be possible to improve the lower bounds in our main results with degree-dependent terms by specializing the first bound in \cref{eq:replace_var_bds} to the case when $X_i = A_i (\mathbf{B})$. However, improving \cref{eq:replace_var_bds} in this case appears to require a good lower bound on the covariance of $A_i (\mathbf{B})$ and $A_j (\mathbf{B})$ for $j \sim i$.
\end{remark}

We now combine the preceding lemmas to establish upper bounds on the first two moments of $\Var A_I$. As in \cref{lem:replace_var}, the coefficient in the bound on the second moment matters little to \cref{thm:rho,thm:rem}.

\begin{lemma}\label{lem: final varai bds}
    Let $\sigma_W$ and $\sigma_B$ be positive real numbers, and let $G$ be a regular graph. If $(\mathbf{W},\mathbf{B})$ is a separable, $(\sigma_W,\sigma_B)$-disordered energy landscape on $G$, then the variance of $A_I = A_I (\mathbf{B})$ with respect to an independent, uniformly random $I \in V(G)$ satisfies
    \[
    \E (\Var A_I) \leq 4 \sigma_B^2 \quad \text{and} \quad \E \left( (\Var A_I)^2 \right) \leq 1720 \sigma_B^4.
    \]
\end{lemma}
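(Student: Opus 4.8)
The plan is to apply \cref{lem:replace_var} with $X_i = A_i$, which reduces the two target bounds to bounds on $\V(A_1)$ and $\mu_4(A_1)$, and then to bound these using the log-sum-exp representation \eqref{eq: a is lse} together with the concentration estimates of \cref{lem:logsumexp partial bd,lem:vargci}. To set this up I would first unpack separability: writing $Z_i = f(W_i)$ and $\eta_{ij} = B_{ij} - Z_i$, \cref{def:separable} says $(\eta_{ij})_{j\sim i}$ is an i.i.d.\ $\mathcal{N}(0,\sigma^2)$ vector independent of $W_i$, hence of $Z_i$. Because $G$ is $d$-regular and the $W_i$ are i.i.d., the pairs $(\g_i, Z_i)$ with $\g_i = (-\eta_{ij})_{j\sim i} \sim \mathcal{N}(\mathbf{0},\sigma^2\mathbf{I}_d)$ are identically distributed in $i$, with $\g_i$ independent of $Z_i$; the representation $A_i = \lse(\g_i) - Z_i$ from \eqref{eq: a is lse} then shows the $A_i$ are identically distributed and square-integrable, as \cref{lem:replace_var} requires.

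The conceptual crux—and the step I expect to be the main obstacle—is controlling the $-Z_i$ contribution to $A_i$, since $f$ is arbitrary and a priori $Z_i = f(W_i)$ could have enormous spread. What rescues the argument is that \cref{def:landscape} forces $B_{ij} \sim \mathcal{N}(0,\sigma_B^2)$ marginally. Combined with the independent decomposition $B_{ij} = Z_i + \eta_{ij}$, this pins down the low-order moments of $Z_i$: matching means gives $\E Z_i = 0$; matching variances gives $\V(Z_i) = \sigma_B^2 - \sigma^2$, which in particular forces $\sigma \leq \sigma_B$; and matching fourth moments, using that the odd cross terms vanish by independence and centering, gives $\mu_4(Z_i) = 3\sigma_B^4 - 6(\sigma_B^2-\sigma^2)\sigma^2 - 3\sigma^4 = 3(\sigma_B^2-\sigma^2)^2$. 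Thus $\V(Z_1) \leq \sigma_B^2$ and $\mu_4(Z_1) \leq 3\sigma_B^4$, entirely in terms of $\sigma_B$. Notably, this avoids invoking Cram\'er's decomposition theorem to conclude that $Z_i$ is Gaussian; only the first four moments are needed.

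With the moments of $Z_1$ in hand, I would bound the central moments of $A_1 = \lse(\g_1) - Z_1$ by exploiting the independence of $\lse(\g_1)$ and $Z_1$. Since $\lse$ is $1$-Lipschitz (\cref{lem:logsumexp partial bd}), \cref{lem:vargci} gives the dimension-free bounds $\mu_2(\lse(\g_1)) \leq 4\sigma^2$ and $\mu_4(\lse(\g_1)) \leq 16\sigma^4$. As $Z_1$ is centered and independent of $\g_1$, expanding the central moments kills the odd cross terms, leaving $\V(A_1) = \mu_2(\lse(\g_1)) + \V(Z_1)$ and $\mu_4(A_1) = \mu_4(\lse(\g_1)) + 6\,\mu_2(\lse(\g_1))\,\V(Z_1) + \mu_4(Z_1)$. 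Substituting the bounds above and using $\sigma \leq \sigma_B$ yields $\V(A_1) \leq 4\sigma^2 + (\sigma_B^2 - \sigma^2) \leq 4\sigma_B^2$ and $\mu_4(A_1) \leq 16\sigma^4 + 24\sigma^2\sigma_B^2 + 3\sigma_B^4 \leq 43\sigma_B^4$.

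Finally I would feed these into \cref{lem:replace_var}: its first bound gives $\E(\Var A_I) \leq \V(A_1) \leq 4\sigma_B^2$, and its second gives $\E\big((\Var A_I)^2\big) \leq 40\,\mu_4(A_1) \leq 40 \cdot 43\,\sigma_B^4 = 1720\,\sigma_B^4$, exactly the two claimed inequalities. The only genuinely delicate point is the moment-matching argument that tames $Z_i$; everything else is routine bookkeeping, and the slack in the constants (for instance the crude factor $40$ from \cref{lem:replace_var}) is immaterial to \cref{thm:rho,thm:rem}.
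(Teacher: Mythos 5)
Your proposal is correct and follows essentially the same route as the paper: reduce to $\V(A_1)$ and $\mu_4(A_1)$ via \cref{lem:replace_var}, use the independent decomposition $B_{ij}=Z_i+(B_{ij}-Z_i)$ with the Gaussian marginal of $B_{ij}$ to control the moments of $Z_1$, and apply \cref{lem:logsumexp partial bd,lem:vargci} to the $\lse$ part, arriving at the same constants $4\sigma_B^2$ and $40\cdot 43\,\sigma_B^4$. The only cosmetic difference is that you compute $\mu_4(Z_1)=3(\sigma_B^2-\sigma^2)^2$ exactly, where the paper settles for the bound $\mu_4(Z_1)\leq\mu_4(B_{1j})=3\sigma_B^4$ via the same fourth-moment identity.
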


\begin{proof}
    We aim to apply \cref{lem:replace_var} with $X_i = A_i (\mathbf{B})$, which requires the $A_i (\mathbf{B})$ to be identically distributed and $A_I (\mathbf{B})$ to be square-integrable. To verify the first condition, we recall \cref{eq: a is lse}. It states that, because $(\mathbf{W},\mathbf{B})$ is a separable energy landscape, there are $f: \R \to \R$ and $\sigma > 0$ such that 
    \[
    A_i (\mathbf{B}) = \lse (\mathbf{b}_{i}) - Z_i,
    \]
    in terms of a vector $\mathbf{b}_{i} = (Z_i - B_{ij})_{j: \, j \sim i}$ of i.i.d.\ $\mathcal{N}(0,\sigma^2)$ random variables and $Z_i = f(W_i)$, which is independent of $\mathbf{b}_i$. In particular, the conditional distribution of $A_i (\mathbf{B})$ given~$W_i$ is the same for every $i \in V(G)$. Since the $W_i$ are identically distributed, so too are the $A_i (\mathbf{B})$.
    
    Next, we note that $A_I (\mathbf{B})$ is square-integrable because $\lse (\mathbf{b}_i)$ and $Z_i$ are square-integrable for every $i \in V(G)$. Indeed, it is well known that, if $\mathbf{x} \in \R^d$, then $\lse (\mathbf{x})$ lies between $\max_{k \in [d]} x_k$ and $\max_{k \in [d]} x_k + \log d$. Hence, if $d$ is the degree of $G$, then the Bhatia--Davis inequality implies that the variance of $\lse (\mathbf{b}_i)$ cannot be larger than $(\log d)^2$. Regarding $Z_i$, since  $B_{ij} \sim \mathcal{N}(0,\sigma_B^2)$ for every $j \sim i$ and $Z_i$ is independent of $\lse (\mathbf{b}_i)$, the variances $s_i \equiv s_{ij} = \V(Z_i - B_{ij})$ and $t_i = \V(Z_i)$ satisfy $\sigma_B^2 = s_i + t_i$. In particular, the variance of $Z_i$ is at most $\sigma_B^2$.

    Using \cref{lem:replace_var} and the independence of $\mathbf{b}_1$ and $Z_1$, we find that $\Var A_I$ satisfies
    \[
    \E (\Var A_I (\mathbf{B})) \leq \V (A_1 (\mathbf{B})) = \V (\lse (\mathbf{b}_{1})) + \V (Z_1).
    \]
    Since $\lse$ is $1$-Lipschitz (\cref{lem:logsumexp partial bd}), an application of \cref{lem:vargci} with $m = 2$ shows that $\V (\lse (\mathbf{b}_1)) \leq 4s_1$, hence
    \[
    \E (\Var A_I (\mathbf{B})) \leq 4s_1 + t_1 \leq  4\sigma_B^2.
    \]
    The second inequality follows from the fact that $\sigma_B^2 = s_1 + t_1$.

    Concerning the second moment of $\Var A_I (\mathbf{B})$, \cref{lem:replace_var} states that 
    \[
    \E \big( (\Var A_I (\mathbf{B}))^2 \big) \leq 40 \, \mu_4 (A_1 (\mathbf{B})) = 40 \big(\mu_4 (\lse (\mathbf{b}_1)) + 6 \V(\lse (\mathbf{b}_1)) \V (Z_1) + \mu_4 (Z_1) \big).
    \]
    Here, we have used the fact that, if $S$ and $T$ are independent random variables with finite fourth moments, then 
    \[
    \mu_4 (S+T) = \mu_4 (S) + 6 \V(S) \V(T) + \mu_4 (T).
    \]
    Applying the same fact with $S = Z_1$ and $T = B_{1j} - Z_1$ shows that 
    \[
    \mu_4 (Z_1) = \mu_4 (B_{1j}) - 6 \V(Z_1) \V (B_{1j} - Z_1) - \mu_4 (B_{1j} - Z_1) \leq \mu_4 (B_{1j}) = 3\sigma_B^4.
    \]
    By \cref{lem:vargci} (with $m=4$), the fourth central moment of $\lse (\mathbf{b}_1)$ is at most $16 \sigma_B^4$. We use the simple bounds $\V(Z_1) \leq \sigma_B^2$ and $\V(\lse (\mathbf{b}_1)) \leq 4 \sigma_B^2$ from above to conclude that
    \[
    \E \big( (\Var A_I (\mathbf{B}))^2 \big) \leq 40 (3 \sigma_B^4 + 6 \sigma_B^2 \cdot 4 \sigma_B^2 + 16 \sigma_B^4) = 1720 \sigma_B^4.
    \]
\end{proof}

\subsection{Bound on the expected ratio of variances}

We are now ready to prove the main result of this section, which is an upper bound on $\E (r^2)$. The statement uses the same hypotheses as \cref{thm:rho}, except the requirement that the exit rates are a.s.\ non-identical.

\begin{proposition}\label{prop:r2}
    Let $G$ be a regular graph with $n \geq 6$ vertices, and let $\sigma_W$ and $\sigma_B$ be positive real numbers. If $(\mathbf{W},\mathbf{B})$ is a separable, $(\sigma_W,\sigma_B)$-disordered energy landscape on $G$, then the ratio $r = r(\mathbf{Q})$ associated with the Arrhenius-like dynamics $\mathbf{Q} = \mathrm{Arr}_G (\mathbf{W},\mathbf{B})$ satisfies
    \begin{equation}\label{eq: prop bound on r2}
    \E (r^2) \leq 4\left(1 + O(n^{-1/2}) \right) \left(\frac{\sigma_B}{\sigma_W}\right)^2.
    \end{equation} 
\end{proposition}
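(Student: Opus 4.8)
The plan is to assemble the bound on $\E(r^2)$ by substituting the ingredients already prepared in the preceding subsections into the starting inequality \eqref{eq:r2_starting_point}, and then carrying out the asymptotic bookkeeping in $n$. Recall that \eqref{eq:r2_starting_point} reads
\[
\E (r^2) \leq \E (\Var A_I) \cdot \E \left( (\Var W_I)^{-1} \right) + \sqrt{\, \E\big( (\Var A_I)^2 \big) \cdot 2 \,\V \big( (\Var W_I)^{-1} \big)}.
\]
So the whole proposition amounts to plugging in the four quantities I have already estimated and simplifying. From \cref{lem:invvar}, I have the two exact moments of $U = (\Var W_I)^{-1}$, namely $\E(U) = n/((n-3)\sigma_W^2)$ and $\V(U) = 2n^2/((n-3)^2(n-5)\sigma_W^4)$. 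From \cref{lem: final varai bds}, I have $\E(\Var A_I) \leq 4\sigma_B^2$ and $\E((\Var A_I)^2) \leq 1720\,\sigma_B^4$.

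\textbf{First} I would handle the leading (product) term. Multiplying the two bounds gives
\[
\E (\Var A_I) \cdot \E(U) \leq 4\sigma_B^2 \cdot \frac{n}{(n-3)\sigma_W^2} = 4 \cdot \frac{n}{n-3} \left(\frac{\sigma_B}{\sigma_W}\right)^2.
\]
Since $n/(n-3) = 1 + 3/(n-3) = 1 + O(n^{-1})$, this term already has the shape $4(1+O(n^{-1}))(\sigma_B/\sigma_W)^2$, which is within the advertised $4(1+O(n^{-1/2}))(\sigma_B/\sigma_W)^2$. \textbf{Next} I would bound the square-root (Cauchy--Schwarz correction) term and show it is the source of the $O(n^{-1/2})$ error. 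Inserting the moment bounds,
\[
\sqrt{\,1720\,\sigma_B^4 \cdot 2 \cdot \frac{2n^2}{(n-3)^2(n-5)\sigma_W^4}} = \sqrt{\frac{6880\, n^2}{(n-3)^2 (n-5)}}\left(\frac{\sigma_B}{\sigma_W}\right)^2.
\]
The key observation is that $n^2/((n-3)^2(n-5)) = O(n^{-1})$, so the quantity under the square root is $O(n^{-1})$ and the square root itself is $O(n^{-1/2})$. Writing this correction as $C n^{-1/2}(1+o(1))(\sigma_B/\sigma_W)^2$ for an explicit constant $C = \sqrt{6880}$, I can absorb it into the product term by factoring out $4(\sigma_B/\sigma_W)^2$, yielding
\[
\E(r^2) \leq 4\left(\frac{n}{n-3} + \tfrac14\sqrt{\tfrac{6880\,n^2}{(n-3)^2(n-5)}}\right)\left(\frac{\sigma_B}{\sigma_W}\right)^2 = 4\bigl(1 + O(n^{-1/2})\bigr)\left(\frac{\sigma_B}{\sigma_W}\right)^2,
\]
which is exactly \eqref{eq: prop bound on r2}.

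\textbf{The main obstacle}, such as it is, is bookkeeping rather than mathematics: the proposition is essentially a mechanical combination of \cref{lem:invvar} and \cref{lem: final varai bds}, and the only subtlety is checking that the Cauchy--Schwarz term degrades the error from the benign $O(n^{-1})$ of the product term down to $O(n^{-1/2})$, so that the stated rate is sharp and not improvable to $O(n^{-1})$ without further work. It is worth being careful that both the factor $\E(\Var A_I)$ and $\E((\Var A_I)^2)$ are bounded uniformly in $n$ (they depend only on $\sigma_B$, by the dimension-free Gaussian concentration in \cref{lem:vargci}), so that all the $n$-dependence is carried by the $\Var W_I$ factors; this is precisely what lets the degree of $G$ drop out of the final estimate. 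I would also remark that the large numerical constant under the root ($6880$) is immaterial because it multiplies a vanishing $n^{-1/2}$, consistent with the paper's stated indifference to optimizing constants.
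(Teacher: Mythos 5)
Your proposal is correct and follows essentially the same route as the paper: both start from \cref{eq:r2_starting_point}, substitute the exact moments of $(\Var W_I)^{-1}$ from \cref{lem:invvar} and the bounds on the moments of $\Var A_I$ from \cref{lem: final varai bds}, and observe that the Cauchy--Schwarz correction term is $O(n^{-1/2})(\sigma_B/\sigma_W)^2$ while the product term is $4(1+O(n^{-1}))(\sigma_B/\sigma_W)^2$. Your arithmetic (including the constant $\sqrt{6880}$) matches the paper's computation exactly.
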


\begin{proof}
Our starting point is \cref{eq:r2_starting_point}:
\begin{equation*}
\E (r^2) \leq \E (\Var A_I) \cdot \E \left( (\Var W_I)^{-1} \right) + \sqrt{\, \E\big( (\Var A_I)^2 \big) \cdot 2 \,\V \big( (\Var W_I)^{-1} \big)}.
\end{equation*}
The hypotheses on $\mathbf{W}$ and $\mathbf{B}$ are precisely those required for both \cref{lem:invvar,lem: final varai bds} to apply.\footnote{Technically, the proof of \cref{lem: final varai bds} does not require the barriers to be symmetric.} We substitute the expressions for the moments of $(\Var W_I)^{-1}$ from \cref{lem:invvar}, as well as the upper bounds on the moments of $\Var A_I$ from \cref{lem: final varai bds}, to find that
\[
\E (r^2) \leq 4 \sigma_B^2 \cdot \frac{n}{(n-3)\sigma_W^2} + \sqrt{ 1720\, \sigma_B^4 \cdot \frac{4n^2}{(n-3)^2 (n-5)\sigma_W^4}} = 4 \big(1 + O(n^{-1/2})\big) \left( \frac{\sigma_B}{\sigma_W}\right)^2.
\]
\end{proof}

\section{Combining the propositions}\label{sec:proof of main result}

We now combine \cref{prop: first part,prop:r2} to prove \cref{thm:rho}. Recall that we are given a connected, regular graph $G$ with $n \geq 6$ vertices, and positive real numbers $\sigma_W$ and $\sigma_B$. We must show that, if $(\mathbf{W},\mathbf{B})$ is a separable, $(\sigma_W,\sigma_B)$-disordered energy landscape for which the transition rate matrix $\mathbf{Q} = \mathrm{Arr}_G(\mathbf{W},\mathbf{B})$ has a.s.\ non-identical exit rates, then the correlation $\rho = \rho(\mathbf{Q})$ satisfies
\[
\E (\rho) \geq 1 - 8 \left(1+O(n^{-1/2}) \right) \left( \frac{\sigma_B}{\sigma_W} \right)^2.
\]

\begin{proof}[Proof of \texorpdfstring{\cref{thm:rho}}{Theorem 1}]
Assume for the moment that $\rho$ is a.s.\ well defined. Then, according to \cref{prop: first part}, $\rho$ satisfies
\[
\E (\rho) \geq 1 - 2 \, \E \left( \frac{\Var A_I}{\Var W_I} \right),
\]
where $I \in [n]$ is uniformly random and independent of $(\mathbf{W},\mathbf{B})$. By \cref{prop:r2}, which has the same hypotheses \cref{thm:rho}, the expected ratio of variances satisfies 
\[
    \E \left( \frac{\Var A_I}{\Var W_I} \right) \leq 4\left(1 + O(n^{-1/2}) \right) \left(\frac{\sigma_B}{\sigma_W}\right)^2.
\]
The preceding inequalities together imply the claimed lower bound of $\E (\rho)$.

To finish the proof, we explain why $\rho$ is a.s.\ well defined. For $\rho$ to be well defined, $\bs{\pi}$ must be well defined, and $\bs{\pi}$ and $\mathbf{q}$ must have non-identical entries. The former is a.s.\ true because $\mathbf{Q}$ is a.s.\ irreducible, due to the fact that $G$ is connected and $Q_{ij} = \exp (W_i - B_{ij})$ is a.s.\ positive for every $(i,j) \in E(G)$. The latter is true because $\bs{\pi}$ is the Boltzmann distribution \eqref{eq:boltzmann} with i.i.d.\ Gaussian well depths, which are a.s.\ non-identical, while $\mathbf{q}$ has a.s.\ non-identical entries by assumption.
\end{proof}

\subsubsection*{Acknowledgements} 

J.C.\ and D.R.\ were supported by NSF award CCF-2106687 and the US Army Research Office Multidisciplinary University Research Initiative award W911NF-19-1-0233.

\bibliographystyle{alpha}

\newcommand{\etalchar}[1]{$^{#1}$}

\end{document}